\def\ol{\overline}
\newtheorem{theorem}{Theorem}
\newtheorem{lemma}[theorem]{Lemma}
\newtheorem{defn}[theorem]{Definition}
\newtheorem{coro}[theorem]{Corollary}
\def\acknowledgment{\par\addvspace{17pt}\small\rmfamily
\trivlist\if!\ackname!\item[]\else
\item[\hskip\labelsep
{\bfseries\ackname}]\fi}
\def\eps{\varepsilon}
\def\C{\mathbb{C}}
\def\R{\mathbb{R}}
\def\N{\mathbb{N}}
\def\D{\mathbb{D}}
\def\cF{\mathcal{F}}
\newcommand{\cO}{\mathcal{O}_J}
\newcommand{\clD}{{\overline{\,\D}}}
\newcommand{\Psh}{\mathrm{Psh}_J}
\def\z{\zeta}%
\begin{document}
\title[On the envelope of Poisson functional]{On the envelope of Poisson functional on almost complex manifolds} 
\author{Florian Bertrand \& Uro\v s Kuzman}

%
%
\begin{abstract}
We establish the plurisubharmonicity of the envelope
of the Poisson functional on almost complex manifolds. That is, we generalize the corresponding result for complex manifolds and almost complex manifolds of complex dimension two. 
\end{abstract} 

\maketitle
We denote by  $\D=\{\zeta\in \C; |\zeta|< 1\}$  the open unit disc in 
the complex plane. Given a smooth almost complex manifold $(M,J)$, 
we denote by $\cO\left(\clD,M\right)$ 
the set of $J$-holomorphic discs in $M$, i.e.,
the set of smooth maps $v\colon \clD \to M$ 
which are $J$-holomorphic in a neighborhood of $\clD$. 

A {\em disc functional} on $M$ is a function 
\[
	H_M \colon \cO\left(\clD,M\right) \to\ol\R= [-\infty,+\infty]. 
\]
The {\em envelope} $EH_M\colon M\to\overline\R$ associated to  $H_M$ in $p\in M$ is 
defined by 
$$EH_M(p)= \inf \left\{ H_M(v); v \in \cO\left(\clD,M\right), \ v(0)=p\right\}.$$ 

Given an upper semi-continuous function $f$ defined on $M$, the corresponding 
{\em Poisson functional}  is the disc functional defined by
\begin{equation*}
	P_f(v)= \frac{1}{2\pi} \int^{2\pi}_0 f\left(v\left(e^{i t}\right)\right)\, {\mathrm d}t,
\end{equation*}
where $v\in \cO\left(\clD,M\right)$. Our main result is the following: 
\begin{theorem}\label{main theorem}
Let $f\colon M\to\R\cup\left\{-\infty\right\}$ be an upper semi-continuous function defined on a smooth almost complex manifold $(M,J)$. Then $EP_f$ is $J$-plurisubharmonic on $M$ or identically $-\infty$.
\end{theorem}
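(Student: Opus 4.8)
The plan is to verify, assuming $EP_f\not\equiv-\infty$, the two defining properties of $J$-plurisubharmonicity for $u:=EP_f$: that $u$ is upper semi-continuous, and that $u\circ v$ is subharmonic on $\D$ for every $v\in\cO\left(\clD,M\right)$. Since precomposing a $J$-holomorphic disc with an affine automorphism $\z\mapsto\z_0+r\z$ of $\D$ again yields a $J$-holomorphic disc, subharmonicity of all the $u\circ v$ reduces to the single sub-mean value inequality
\begin{equation*}
EP_f(v(0))\ \le\ \frac{1}{2\pi}\int_0^{2\pi}EP_f\bigl(v(e^{it})\bigr)\,\du t\qquad\text{for all }v\in\cO\left(\clD,M\right).\tag{$\star$}
\end{equation*}
It is convenient to reduce first to continuous $f$: by a standard regularization, $f$ is the pointwise limit of a decreasing sequence of continuous functions $f_k$, so monotone convergence gives $EP_{f_k}\downarrow EP_f$, and a decreasing limit of $J$-plurisubharmonic functions is $J$-plurisubharmonic or $\equiv-\infty$; replacing in addition each $f_k$ by $\max(f_k,-k)$, one may assume $f$ continuous and bounded on compact sets, and it suffices to treat this case.

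The upper semi-continuity of $EP_f$ is soft. The Poisson functional $v\mapsto P_f(v)$ is itself upper semi-continuous on $\cO\left(\clD,M\right)$ for the $C^0$-topology, by the reverse Fatou lemma together with the pointwise upper semi-continuity of $f$. One also needs the standard \emph{perturbation property} of $J$-holomorphic discs: if $v\in\cO\left(\clD,M\right)$ and $q\in M$ is close to $v(0)$, there is $\wt v\in\cO\left(\clD,M\right)$ with $\wt v(0)=q$ and $\|\wt v-v\|_{C^0(\clD)}$ as small as we wish, obtained from a bounded right inverse of the linearization of $\debar_J$ at $v$ together with the evaluation-at-$0$ constraint, via the implicit function theorem. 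Choosing a nearly optimal disc at $p$ and perturbing its center then yields $\limsup_{q\to p}EP_f(q)\le EP_f(p)$.

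The heart of the matter is $(\star)$. Fix $v$, put $I:=\frac{1}{2\pi}\int_0^{2\pi}EP_f\bigl(v(e^{it})\bigr)\,\du t$ and $\eps>0$; the goal is a disc $g\in\cO\left(\clD,M\right)$ with $g(0)=v(0)$ and $P_f(g)\le I+3\eps$, after which $EP_f(v(0))\le P_f(g)$ and $\eps\to0$ give $(\star)$. Since $\theta\mapsto EP_f(v(e^{i\theta}))$ is upper semi-continuous, pick a continuous $h\ge EP_f\circ v$ on $\partial\D$ with $\frac{1}{2\pi}\int_0^{2\pi}h\,\du\theta\le I+\eps$. For each $\theta$ choose a disc nearly realizing $EP_f(v(e^{i\theta}))$; the perturbation property propagates it to an arc around $\theta$, and compactness of $\partial\D$ with a partition into finitely many such arcs produces a family $\{w^{(\theta)}\}_{\theta}$ with $w^{(\theta)}(0)=v(e^{i\theta})$ and $P_f(w^{(\theta)})\le h(\theta)+\eps$ for almost every $\theta$. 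It then remains to \emph{glue} the discs $w^{(\theta)}$ onto the boundary of $v$ into a single $J$-holomorphic disc $g$ with $g(0)=v(0)$ whose boundary average of $f$ is at most $\frac{1}{2\pi}\int_0^{2\pi}P_f(w^{(\theta)})\,\du\theta+\eps\le I+3\eps$.

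This gluing is the main obstacle, and the only place where the non-integrability of $J$ genuinely enters. The strategy I would follow is: cover the compact curve $v(\partial\D)$ by finitely many coordinate charts, rescaled so that $J$ becomes as close as we wish, in $C^1$ norm, to the standard complex structure of $\C^n$; in these coordinates carry out the classical Euclidean (Poletsky) disc-pasting to obtain an \emph{approximately} $J$-holomorphic disc $g_0$ with $g_0(0)=v(0)$, $P_f(g_0)\le I+2\eps$, and $\debar_J g_0$ small in a suitable norm --- note that off the small pasting region $g_0$ coincides with $v$ or with some $w^{(\theta)}$ and is already $J$-holomorphic there --- and finally correct $g_0$ to a bona fide $J$-holomorphic disc $g$ by a contraction/implicit-function argument for $\debar_J$ under the constraint $g(0)=v(0)$, taking the correction so small in $C^0(\clD)$ that $P_f(g)\le P_f(g_0)+\eps$ by the upper semi-continuity of $P_f$. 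The delicate points are the uniform bound on the right inverse of the linearized operator along the pasted, possibly many-sheeted, configuration and the compatibility of Poletsky's pasting with the rescaled charts; granting these, the remainder is bookkeeping and the proof concludes.
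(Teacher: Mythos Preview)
Your outline is correct through the reduction to continuous $f$, the upper semi-continuity, and the reduction of $(\star)$ to producing a single disc $g$ with $g(0)=v(0)$ and $P_f(g)$ close to the double average over the torus $G(\partial\D\times\partial\D)$. The gap is in the last paragraph, and it is exactly the difficulty this paper exists to resolve.

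The ``classical Poletsky disc-pasting'' you invoke produces, in local coordinates, an approximately holomorphic map of the type $\varphi^N(\zeta)=G(\zeta,\zeta^N)$ with $N$ large; it is only for large $N$ that most of the boundary lands near the torus. The derivative of the last component of $\varphi^N$ grows like $N$, so the family $(\varphi^N)_N$ is unbounded in $W^{1,p}(\D)$. Consequently there is no uniform bound on the right inverse of the linearized $\debar_J$-operator along $\varphi^N$, nor a uniform Lipschitz constant, and the implicit-function/Newton correction you propose does not apply: the neighborhood on which it works may shrink faster than $\|\mathcal F(\varphi^N)\|_{L^p}$ does. This is spelled out in the paper (beginning of \S2 and the discussion after Corollary~\ref{local coro}). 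Your attempt to rescue this by ``rescaling the charts so that $J$ is $C^1$-close to $J_{st}$'' does not help: such rescaling shrinks the chart, while the extremal discs $w^{(\theta)}$ have images of fixed size and must lie inside it; moreover the blow-up of $\partial_\zeta\varphi^N$ is intrinsic to the $\zeta^N$ factor and persists regardless of how close $J$ is to $J_{st}$.

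What the paper does instead is construct, along each arc $I_j\subset\partial\D$, coordinates $\phi^j$ with the special property that the last column of the complex matrix $A_j$ vanishes (Theorem~\ref{propgl}, item iv)). In such coordinates the operator $\mathcal F_j$ and its linearization $D_\varphi$ depend only on the first $n-1$ components $\varphi'$ of $\varphi$ and on their derivatives; the exploding $n$-th derivative simply does not enter. Lemma~\ref{c} then shows that the relevant family satisfies the $LQJ$-condition, so local Newton iteration and the non-linear Cousin gluing (Theorem~\ref{Cousin}) go through. Your sentence ``granting these, the remainder is bookkeeping'' grants precisely the content of \S1--\S2; without the special coordinates and Lemma~\ref{c}, the uniform bound you need is false in dimension $\ge3$.
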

\noindent The envelope $EP_f$ of the Poisson functional 
is the largest plurisubharmonic minorant of the 
upper semi-continuous function $f$. In  case  $M=\C^n$, the corresponding result was proved by 
Poletsky \cite{Poletsky1991,Poletsky1993}, 
and by Bu and Schachermayer \cite{Bu-Schachermayer}.
The result was then extended to complex manifolds
by L\'arusson and Sigurdsson 
\cite{Larusson-Sigurdsson1998,Larusson-Sigurdsson2003}, 
and Rosay \cite{Rosay1,Rosay2}.
Later on, Drinovec Drnov\v{s}ek and Forstneri\v{c} proved it for locally irreducible complex spaces \cite{DF2}. We provide an analogue for almost complex manifolds of any dimension.

In case  $\dim_\mathbb{C}M=2$, Theorem \ref{main theorem} was already established by the second named author \cite{KUZMAN3}. 
The only missing step towards its higher dimensional analogue was a method of attaching a $J$-holomorphic disc to a real torus (often 
referred as a solution to the Riemann-Hilbert problem). More precisely, 
given an embedded  $J$-holomorphic disc $v$, we associate to $v$ a real 
$2$-dimensional torus formed by the boundary circles of discs centered at the boundary points $v(\zeta)$, $\zeta\in\partial\D$. We then
seek a disc centered at $v(0)$ and approximately attached to this torus. In low dimension, such a construction was settled by Coupet, Sukhov and Tumanov based on methods for elliptic PDEs \cite{PROPER, ST2}; however their approach does not apply when 
$\dim_\mathbb{C}M>2$. In the present paper, we then provide, in arbitrary dimension, a solution of the Riemann-Hilbert problem (Theorem 
\ref{propgl}) based on gluing techniques from \cite{BERKUZ,Kuzman2021}. We emphasize that such a result is of an independent interest 
since solutions of such problems appear in various constructions of proper and complete holomorphic maps.

The plan of the paper is as follows. In $\S$1 we introduce the gluing techniques related to \cite{BERKUZ,Kuzman2021}. Section $\S$2 is devoted to the Riemann-Hilbert problem. In $\S$3 we prove Theorem \ref{main theorem}. Finally, we present three applications in $\S$4;  more precisely, we describe some regularization results for $J$-plurisubharmonic functions, we give a characterization of compact $\Psh$-hulls by $J$-holomorphic discs and we establish the $J$-plurisubharmonicity of the envelope of the Lelong functional.

\section{Gluing techniques for $J$-holomorphic maps}
An {\it almost complex structure} $J$ on a real even dimensional smooth manifold $M$ is a $\left(1,1\right)$ tensor field
which  satisfies $J^{2}=-Id$. Throughout the paper we assume that $J$ is smooth. The pair $\left(M,J\right)$ is called an {\it almost complex 
manifold}. We denote by $J_{st}$ the standard integrable structure on $\R^{2n}$ for every $n\in\N$. A differentiable map $v:\left(M',J'\right) \longrightarrow \left(M,J\right)$ between two almost complex manifolds 
is {\it $\left(J',J\right)$-holomorphic} if 
\begin{eqnarray}\label{hol}J\left(v\left(p\right)\right)\circ d_{p}v=d_{p}v\circ J'\left(p\right),\end{eqnarray} 
for every $p \in M'$. When $M'$ is a smoothly bounded domain in $\C$ the map $v$ is called a {\it $J$-holomorphic disc}. We denote by $\D$ the unit disc and by $\Delta$ any other smoothly bounded domain in $\C$.



Let $V\subset M$ and $U\subset \R^{2n}$ be two open sets and let $\phi\colon V\to U$ be a coordinate diffeomorphism. The local structure $J_{loc}=d\phi_*(J)$ can then be represented as a $2n\times 2n$ real matrix map that satisfies $J_{loc}(p)^2=-I$ for all $p\in U$. We denote by $\mathcal{J}(U)$ the set of all such matrix maps satisfying  $\det\left(J_{loc}+J_{st}\right)\neq 0$ on $U$. For such local structures and a map $u=\phi\circ v$, the holomorphicity condition (\ref{hol}) can be rewritten as
\begin{eqnarray}\label{A}\mathcal{F}(u)=u_{\bar{\zeta}}+A(u)\overline{u_{\zeta}}=0,\end{eqnarray}
where $\zeta\in\Delta$ and $A(z)(w)=(J_{st}+J_{loc}(z))^{-1}(J_{loc}(z)-J_{st})(\bar{w})$
is a complex linear endomorphism for every $z\in\C^n$. Hence 
$A$ can be considered as a $n\times n$ smooth complex matrix map acting on $w\in\C^n$ \cite{ST1}. We call $A$ the \emph{complex matrix} associated to $J$ by $\phi$. Throughout the paper, we will use letter $v$ for discs mapping into $M$ and letter $u$ for their local analogues.

Whenever we will work only in $U\subset\mathbb{C}^n$ we will denote the almost complex structure by $J\in\mathcal{J}(U)$ instead of $J_{loc}$. Also we will refer to (\ref{A}) as the $\bar{\partial}_J$-derivative of $u$ and consider $\mathcal{F}$ as a non-linear operator from the Sobolev space $W^{1,p}(\Delta,U)$ to the Lebesgue space $L^{p}(\Delta,U)$ for some $p>2$. We will denote by $\mathcal{O}_J(\Delta,U)$ the set of maps $u\in W^{1,p}(\Delta,U)$  satisfying $\mathcal{F}(u)=0$. Since the boundary of $\Delta$ is assumed to be smooth, the Sobolev embedding theorem implies that $\mathcal{O}_J(\Delta,U)\subset W^{1,p}(\Delta,U)\subset\mathcal{C}^{1-\frac{2}{p}}\left(\overline{\Delta},U\right)$.

The crucial tool in the proof of Theorem \ref{main theorem} is a solution for the  Cousin problem which was developed in \cite{Kuzman2021}. In the following two subsections we present a brief overview for convenience of the reader. 

\subsection{The local Newton-type iteration}
Note that the generalized Cauchy-Riemann system (\ref{A}) is non-linear. Therefore, it is natural to use iterative methods when seeking solutions. Suppose that  $\varphi\colon\Delta\to U$ is a map such that the $L^p$-norm of  $\mathcal{F}(\varphi)$ is small.  Using Newton-type iterations, we seek a disc $u\in\mathcal{O}_{J}(\Delta,U)$ which is $W^{1,p}$-close to $\varphi$. 
For that purpose, we linearize the operator $\mathcal{F}$ along the map $\varphi\in W^{1,p}(\Delta,U)$ and consider  the operator $D_\varphi\colon W^{1,p}(\Delta,\mathbb{R}^{2n})\to L^{p}(\Delta,\mathbb{R}^{2n})$ given by 
$$D_\varphi(V) =  V_{\overline{\zeta}}+A(\varphi)\overline{V_{\zeta}}+\sum_{j=1}^n\left(\frac{\partial A}{\partial z_j}(\varphi)V_j+\frac{\partial A}{\partial \bar{z}_j}(\varphi)\overline{V_j} \right) \ \overline{\varphi_\zeta}.$$     
We point out two important properties of this operator. First, $D_\varphi$ admits a bounded right inverse $Q_\varphi$ for every $\varphi\in W^{1,p}(\Delta,U)$. Moreover, $D_\varphi$ is \emph{locally Lipschitz}, that is, 
 there exists $L>0$ depending on the $W^{1,p}$-norm of $\varphi$, such that for any $\tilde{\varphi} \in W^{1,p}(\Delta,U)$ satisfying $\left\|\varphi-\tilde{\varphi}\right\|_{W^{1,p}(\Delta,U)}<1$ we have 
\begin{equation}\label{eqlip}
\left\|D_\varphi-D_{\tilde{\varphi}}\right\|_{op}<L \left\|\varphi-\tilde{\varphi}\right\|_{W^{1,p}(\Delta,U)},
\end{equation}
where $\|.\|_{op}$ denotes the operator norm. Both properties were established in \cite{BERKUZ} (see Theorem $2$ and p. $7$). 

This enables us to apply the implicit function theorem \cite[Theorem 4]{Kuzman2021}. In particular, we have the following statement. If 
 \begin{eqnarray*}
 \label{bound}\displaystyle \left\|\mathcal{F}(\varphi)\right\|_{L^p(\Delta,U)}<\min\left\{\frac{1}{4\left\|Q_\varphi\right\|_{op}},\frac{1}{8\left\|Q_\varphi\right\|_{op}^2L}\right\},
 \end{eqnarray*} where $L>0$ is given by (\ref{eqlip}), then there exists a disc $u\in W^{1,p}(\Delta,U)$ with $\mathcal{F}(u)=0$ and such that
\begin{eqnarray*}\label{approx}\left\|u-\varphi\right\|_{W^{1,p}(\Delta,U)}\leq2\left\|Q_\varphi\right\|_{op}\left\|\mathcal{F}(\varphi)\right\|_{L^p(\Delta,U)}.\end{eqnarray*}
More precisely, provided that the starting map $u_0=\varphi$ is 'good enough', $u$ arises as the limit map of the iteration 
\begin{eqnarray}\label{Newton}
u_{N+1}=u_N-Q_{\varphi}\mathcal{F}(u_N).
\end{eqnarray}
For details, see \cite[Section 2]{Kuzman2021}. Note also that, by the Sobolev embedding theorem, for $p>2$ the $W^{1,p}$-proximity implies that the limit $u$ is $\mathcal{C}^0$-close to $\varphi$ up to the boundary of $\Delta$.
  
Our strategy is then to use the above iteration method on a family of non-holomorphic maps for which the corresponding bounds are uniform. Therefore, we  introduce the following definition.
\begin{defn}\label{defilqj}
Let $J\in\mathcal{J}(U)$. We say that a family $\mathcal{W}\subset W^{1,p}(\Delta,U)$ \emph{satisfies the LQJ-condition} if
\begin{itemize}
\item[i)] for every $\varphi \in\mathcal{W}$ the  operator $D_{\varphi}$ is locally Lipschitz for a unique constant $L>0$,
\item[ii)] there exists $Q>0$ such that every $D_{\varphi}$, $\varphi \in\mathcal{W}$, admits a right inverse $Q_\varphi$ satisfying $\left\|Q_{\varphi}\right\|_{op}<Q$. 
\end{itemize}
\end{defn}
\noindent It is proved in \cite[Section 2]{Kuzman2021} that every precompact family $\mathcal{W}\subset W^{1,p}(\Delta, U)$ satisfies the LQJ-condition. However, in the present paper we will deal with families that are $W^{1,p}$-unbounded. Therefore, we will have to seek these uniform bounds directly. In particular, we will use the following result which is a direct consequence of Theorem 4 \cite{BERKUZ}.

\begin{coro}\label{local coro}
Let $J\in\mathcal{J}(U)$ and let $\mathcal{W}\subset W^{1,p}(\Delta,U)$ be a family satisfying the LQJ-condition. Then there exist 
$\rho>0$ and $C>0$ depending on $\mathcal{W}$ such that for every $\varphi\in \mathcal{W}$ with $\left\|\mathcal{F}(\varphi)\right\|_{L^{p}(\Delta,U)}<\rho$ there is a $J$-holomorphic map $u\in \mathcal{O}_J(\Delta,U)$ satisfying 
$$\left\|u-\varphi\right\|_{W^{1,p}\left(\Delta,U\right)}<C \left\|\mathcal{F}(\varphi)\right\|_{L^{p}(\Delta,U)}.$$ 
\end{coro}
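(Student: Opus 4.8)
The plan is to reduce the statement directly to the implicit function theorem recalled above (\cite[Theorem 4]{Kuzman2021}), using the LQJ-condition only to make the constants appearing there uniform over the whole family $\mathcal{W}$. Let $L>0$ be the common local Lipschitz constant of the linearizations $D_\varphi$, $\varphi\in\mathcal{W}$, provided by item (i) of Definition \ref{defilqj}, and let $Q>0$ be the uniform bound on the right inverses from item (ii); both depend on $\mathcal{W}$ (and on $J$, $U$, $p$) only. Set
\[
\rho:=\min\left\{\frac{1}{4Q},\frac{1}{8Q^{2}L}\right\},\qquad C:=2Q.
\]

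Now fix $\varphi\in\mathcal{W}$ with $\left\|\mathcal{F}(\varphi)\right\|_{L^{p}(\Delta,U)}<\rho$. Because $\left\|Q_\varphi\right\|_{op}<Q$, the quantities $\frac{1}{4\left\|Q_\varphi\right\|_{op}}$ and $\frac{1}{8\left\|Q_\varphi\right\|_{op}^{2}L}$ are at least $\frac{1}{4Q}$ and $\frac{1}{8Q^{2}L}$ respectively, hence
\[
\left\|\mathcal{F}(\varphi)\right\|_{L^{p}(\Delta,U)}<\rho\leq\min\left\{\frac{1}{4\left\|Q_\varphi\right\|_{op}},\frac{1}{8\left\|Q_\varphi\right\|_{op}^{2}L}\right\}.
\]
This is precisely the smallness hypothesis under which the implicit function theorem applies to $\varphi$: it yields a map $u\in W^{1,p}(\Delta,U)$ with $\mathcal{F}(u)=0$, i.e. $u\in\mathcal{O}_J(\Delta,U)$, such that
\[
\left\|u-\varphi\right\|_{W^{1,p}(\Delta,U)}\leq 2\left\|Q_\varphi\right\|_{op}\left\|\mathcal{F}(\varphi)\right\|_{L^{p}(\Delta,U)}<C\left\|\mathcal{F}(\varphi)\right\|_{L^{p}(\Delta,U)},
\]
which is the desired estimate. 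Note moreover that $\left\|u-\varphi\right\|_{W^{1,p}(\Delta,U)}\leq 2Q\rho\leq\tfrac12<1$, so the Newton iteration (\ref{Newton}) stays inside the ball on which the Lipschitz estimate (\ref{eqlip}) is valid and the scheme is self-consistent.

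I do not expect a genuine obstacle: the content is purely the bookkeeping of constants. The one point to keep in mind is that the thresholds in the implicit function theorem must depend on $\varphi$ \emph{only} through $\left\|Q_\varphi\right\|_{op}$ and the Lipschitz constant of $D_\varphi$, and not, say, through $\left\|\varphi\right\|_{W^{1,p}(\Delta,U)}$ — which we cannot control, since $\mathcal{W}$ is allowed to be $W^{1,p}$-unbounded. This is exactly what is recorded in the discussion preceding Definition \ref{defilqj}, so nothing more is needed. One should also check that the iterates produced by (\ref{Newton}) remain inside $U$; this is part of the conclusion of the quoted theorem, the $\mathcal{C}^{0}$-closeness to $\varphi$ afforded by the $W^{1,p}$-bound keeping them there.
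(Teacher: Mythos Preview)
Your proof is correct and is exactly the argument the paper has in mind: the corollary is stated as a direct consequence of \cite[Theorem 4]{BERKUZ}, and your bookkeeping---setting $\rho=\min\{1/(4Q),1/(8Q^{2}L)\}$ and $C=2Q$---is precisely how the uniform constants of the LQJ-condition are fed into the implicit function theorem recalled just before Definition~\ref{defilqj}. Nothing more is required.
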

\noindent This corollary illustrates how these non-linear approximation techniques should be used in practice. In particular, finding a sequence of maps $(\varphi_N)_{N\in \mathbb{N}}$ such that $\|\mathcal{F}(\varphi_N)\|_{L^p}$  is decreasing is not enough. In order to find suitable $J$-holomorphic approximations one has to find 
uniform constants $L$ and $Q$ from Definition \ref{defilqj}; otherwise the size of the neighborhood of $\varphi_N$ on which the implicit function theorem is applied may shrink as $N\to \infty$. That is, $\rho>0$ may tend to zero and $C>0$ may tend to infinity.  

Finally, for any fixed $a\in\Delta$, the right inverses of the corresponding linearized operators $D_{\varphi}$ can be chosen so that $Q_{\varphi}(a)=0$.
This implies that the limit map of the iteration also satisfies $u(a)=\varphi(a)$. Indeed, such a pointwise restriction can be obtained by choosing an appropriate variation of the classical Cauchy-Green operator when defining the inverse $Q_{\varphi}$ and is standard in the local approximation theory (see for instance \cite{ST1}).

\subsection{The non-linear Cousin problem}
The above approximation techniques enable us to solve the following local version of the Cousin problem. 

Let $\Delta_1,\Delta_2\subset\C$ be two smoothly bounded and simply connected domains admitting a non-empty simply connected intersection $\Delta_1\cap\Delta_2$  and a smooth cut-off function $\chi\colon\mathbb{C}\to[0,1]$ that is equal to $1$ on a neighborhood of $\overline{\Delta_1\setminus\Delta_2}$ and that vanishes on a neighborhood of $\overline{\Delta_2\setminus\Delta_1}$. In particular, such a function exists if $\overline{\Delta_1\setminus\Delta_2}\cap \overline{\Delta_2\setminus\Delta_1}=\emptyset.$ We call $(\Delta_1,\Delta_2)$ a \emph{good pair}. Let $U\subset \mathbb{C}^n$ be an open set and let $J\in\mathcal{J}(U)$. We consider two $J$-holomorphic discs $u_1\in\mathcal{O}_J(\Delta_1,U)$ and $u_2\in\mathcal{O}_J(\Delta_2,U)$ whose images are $W^{1,p}$-close on $\Delta_1\cap\Delta_2$. The local Cousin problem consists in finding a $J$-holomorphic map that is $\mathcal{C}^0$-close to the connected union of $u_1(\Delta_1)$ and $u_2(\Delta_2)$ (see \cite[Theorem 1]{Kuzman2021} for a precise statement). Its solution can be obtained by defining a pregluing map 
$$\varphi=\chi u_1+(1-\chi)u_2$$ 
on $\Delta_1\cup \Delta_2$. It turns out that the $L^p$-norm of its $\bar{\partial}_J$-derivative is small. Thus this map can be used as the starting map $u_0=\varphi$ in the Newton-type iteration (\ref{Newton}) in order to obtain a map  'gluing together' $u_1$ and $u_2$. 

However, this construction is purely local and cannot be applied in our case since we will need to glue together discs whose images lie in different charts. Such a non-linear version of the Cousin problem was settled in \cite[Theorem 2]{Kuzman2021}. We briefly explain the method. Let $(M,J)$ be an almost complex manifold and let $v_1\colon \Delta_{1}\to M$ and $v_2\colon \Delta_{2}\to M$ be two $J$-holomorphic discs defined on a good pair $(\Delta_1,\Delta_2)$ and with images lying in different chart neighborhoods $\phi^1\colon V_1\to U_1\subset \C^n$ and $\phi^2\colon V_2\to U_2\subset \C^n$, respectively. We assume that the intersection $V_1\cap V_2$ is non-empty and contains the sets $v_1(\Delta_1\cap\Delta_2)$ and $v_2(\Delta_1\cap\Delta_2)$. We also assume that 
$v_1$ and $v_2$ are $W^{1,p}$-close on $\Delta_1\cap\Delta_2$. Then, as above, we can define a pregluing map $\varphi$ that connects the images of $v_1$ and $v_2$. The restrictions 
$\varphi_1=\phi^1\circ\varphi_{|\Delta_1}$ and $\varphi_2=\phi^2\circ\varphi_{|\Delta_2}$ are then taken as the starting maps of two Newton-type iterations that are now performed simultaneously, each in its own local chart equipped with the respective structures $J_1=d\phi^1_*(J)$ and $J_2=d\phi^2_*(J)$. In general, their limits $u_1\in\mathcal{O}_{J_1}(\Delta_1,U_1)$ and $u_2\in\mathcal{O}_{J_2}(\Delta_2,U_2)$ do not provide a global map $\widehat{v}\colon \Delta_1\cup\Delta_2\to M$ since $(\phi^1)^{-1}\circ u_1\neq (\phi^2)^{-1} \circ u_2$ on $\Delta_1\cap\Delta_2$. However, this can be settled with a subtle construction of appropriate right inverses $Q_{\varphi_1}$ and $Q_{\varphi_2}$ (see p. 4267 and p. 4270 in \cite{Kuzman2021}). Such a map $\widehat{v}$ solves the non-linear Cousin problem.    

In this paper, we apply this construction on a collection of $m$ good pairs. We do this as follows. Let $\Delta_0 \subset \C$ and  $\Delta_j\subset \C$, $1\leq j\leq m$, be a collection of non-empty, smoothly bounded and pairwise disjoint domains such that for each $j,k \in \{1,\ldots,m\}$ we have:
\begin{itemize}
\item[i)] $\Delta_0\cap \Delta_j\neq \emptyset$ and is smoothly bounded and simply connected,
\item[ii)] $\overline{\Delta_0\setminus\Delta_j}\cap \overline{\Delta_j\setminus\Delta_0}=\emptyset,$
\item[iii)] $\Delta_j\cap\Delta_k=\emptyset$ for $k\neq j$.
\end{itemize}
Moreover, we assume that  $0\in\Delta_0$ and that 
$$\mathbb{D}=\Delta_0\cup\left(\cup_{j=1}^m\Delta_j\right).$$
 Since such sets $\Delta_j$ form $m$ good pairs with $\Delta_0$, there is a smooth function $\chi\colon \C\to[0,1]$ that is equal to $1$ on some neighborhood of $\overline{\Delta_0\setminus\cup_{j=1}^m\Delta_j}$ and that vanishes on some neighborhood of $\overline{\cup_{j=1}^m\Delta_j\setminus\Delta_0}$. Therefore, given a collection of maps defined on $\Delta_0$ and $\Delta_j$'s, one can connect them with a map defined on $\mathbb{D}$ provided that their images over $\Delta_0\cap\Delta_j$ lie in the same chart neighborhood. 

In this context, let $v_0\colon \Delta_0\to M$ and $v_j\colon \Delta_j\to M$, $1\leq j\leq m$, be $J$-holomorphic discs. We assume that the image $v_0(\Delta_0)$ lies in a fixed coordinate chart $\phi^0\colon V_0\to U_0$ that intersects $m$ distinct coordinate neighborhoods $\phi^j\colon V_j\to U_j$ of the images $v_j(\Delta_j)$. Then, provided that the images of $v_0$ and $v_j$ are $W^{1,p}$-close on $\Delta_0\cap\Delta_j$, we seek a $J$-holomorphic map which is $\mathcal{C}^0$-close to $v_0$ on $\Delta_0$ and $\mathcal{C}^0$-close to $v_j$'s on $\Delta_j$'s. The following theorem solves this question. The statement is written in terms of local coordinates. That is, the maps $u_0$ and $u_j$ correspond to $\phi^0\circ v_0$ and $\phi^j\circ v_j$ respectively, while the diffeomorphisms $\Psi^j\colon U_j^0\to U_0^j$ represent the transition maps $\phi^0\circ(\phi^j)^{-1}$ between the sets $U_j^0=\phi_j(V_j\cap V_0)\subset U_j$ and $U_0^j=\phi_0(V_j\cap V_0)\subset U_0.$
\begin{theorem}\label{Cousin} 
For $1\leq j\leq m$ let $\Delta_0$, $\Delta_j$, $U_0$, $U_j$, $U_0^j$, $U_j^0$ and $\Psi_j$ be defined as above and let $J_0\in\mathcal{J}(U_0)$ and $J_j\in\mathcal{J}(U_j)$ be such that $J_0=d\Psi^j_*(J_j)$ on $U_0^j$. Moreover, let $u_0\in\mathcal{O}_{J_0}(\Delta_0,U_0)$ be an embedding satisfying $u_0\left(\overline{\Delta_0\cap\Delta_j}\right)\subset U_0^j$ and let $\mathcal{W}_j \subset \mathcal{O}_{J_j}(\Delta_j,U_j)$ $1\leq j\leq m$, be families of $J_j$-holomorphic maps satisfying the $LQJ_j$-condition and such that every $u_j\in \mathcal{W}_j$ satsifies  $u_j\left(\overline{\Delta_0\cap\Delta_j}\right)\subset U_j^0$. Then the following conclusion holds: for every $\epsilon>0$ there is $\delta>0$ such that for any $m$-tupple of maps $u_j\in\mathcal{W}_j$, $1\leq j\leq m$, satisfying $\left\|u_0-\Psi^j(u_j)\right\|_{W^{1,p}(\Delta_0\cap\Delta_j,U_0^j)}<\delta$, there exist $\widehat{u}_0\in \mathcal{O}_{J_0}(\Delta_0,U_0)$ and $\widehat{u}_j\in \mathcal{O}_{J_j}(\Delta_j,U_j)$, $1\leq j\leq m$, with the following properties: 
\begin{itemize}
\item[i)]$\Psi_j(\widehat{u}_j)=\widehat{u}_0$ on $\Delta_0\cap\Delta_j$,  
\item[ii)] $\left\|\widehat{u}_0-u_0\right\|_{W^{1,p}(\Delta_0,U_0)}<\epsilon$ and $\left\|\widehat{u}_j-u_j\right\|_{W^{1,p}(\Delta_j,U_j)}<
\epsilon,$
\item[iii)] $\widehat{u}_0(0)=u_0(0).$
\end{itemize}
\end{theorem}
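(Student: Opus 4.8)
The plan is to combine the non-linear Cousin construction of \cite[Theorem 2]{Kuzman2021} with the uniform LQJ-estimates of the previous subsection, carrying out the $m$ gluings \emph{simultaneously} and exploiting that the domains $\Delta_j$, hence the overlaps $\Delta_0\cap\Delta_j$, are pairwise disjoint. First I would preglue. Fix an admissible tuple with $\left\|u_0-\Psi^j(u_j)\right\|_{W^{1,p}(\Delta_0\cap\Delta_j)}<\delta$ for all $j$; on $\Delta_0$ set $\varphi_0=\chi u_0+(1-\chi)\Psi^j(u_j)$ on $\Delta_0\cap\Delta_j$ and $\varphi_0=u_0$ on $\Delta_0\setminus\bigcup_j\Delta_j$, and on each $\Delta_j$ set $\varphi_j=\chi(\Psi^j)^{-1}(u_0)+(1-\chi)u_j$. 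These are well defined because $u_0(\overline{\Delta_0\cap\Delta_j})\subset U_0^j$, $u_j(\overline{\Delta_0\cap\Delta_j})\subset U_j^0$ and the $\Delta_j$ are disjoint. Since $u_0,u_j$ are holomorphic for $J_0,J_j$ and $\Psi^j$ intertwines these structures, on each overlap one computes $\mathcal{F}_{J_0}(\varphi_0)=(\bar{\partial}\chi)(u_0-\Psi^j(u_j))+R$ with a nonlinear remainder $R$ quadratically small in $\left\|u_0-\Psi^j(u_j)\right\|_{W^{1,p}}$; hence $\left\|\mathcal{F}_{J_0}(\varphi_0)\right\|_{L^p(\Delta_0)}$ and all $\left\|\mathcal{F}_{J_j}(\varphi_j)\right\|_{L^p(\Delta_j)}$ are $\le C\delta$ for a constant $C$ independent of the tuple.

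Next I would secure uniform bounds. Since $u_0$ is fixed, $\{u_0\}$ is precompact, hence satisfies the LQJ-condition; using the local Lipschitz estimate (\ref{eqlip}) and the Neumann-series stability of bounded right inverses, there is $\delta_0>0$ so that every $\varphi_0$ produced above with $\delta<\delta_0$ lies in a fixed $W^{1,p}$-ball about $u_0$ on which $D_{\varphi_0}$ admits a right inverse of norm $\le 2\left\|Q_{u_0}\right\|_{op}$ and is locally Lipschitz with a fixed constant. Likewise, since $\mathcal{W}_j$ satisfies the $LQJ_j$-condition and $\left\|\varphi_j-u_j\right\|_{W^{1,p}}\le C\delta$, the family of all such $\varphi_j$ satisfies the $LQJ_j$-condition with constants independent of the tuple. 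Thus Corollary \ref{local coro} applies with uniform $\rho$ and $C$ to all the $\varphi_0$ and $\varphi_j$ at once.

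The core step is then the choice of matched right inverses and the coupled iteration, following and adapting pp. 4267 and 4270 of \cite{Kuzman2021}. I would choose the bounded right inverses $Q_{\varphi_0}$ of $D_{\varphi_0}$ and $Q_{\varphi_j}$ of $D_{\varphi_j}$ — obtained from suitable variants of the Cauchy--Green operator — so that: (a) they are uniformly bounded by the previous step; (b) $Q_{\varphi_0}g$ vanishes at $0$ for every $g$, which is possible since $0\in\Delta_0$, cf. the remark closing the previous subsection; and (c) for each $j$, whenever $g_0\in L^p(\Delta_0)$ and $g_j\in L^p(\Delta_j)$ are compatible over $\Delta_0\cap\Delta_j$ in the sense imposed by $\Psi^j$ together with $D_{\varphi_0},D_{\varphi_j}$, the corrections satisfy $d\Psi^j(Q_{\varphi_j}g_j)=Q_{\varphi_0}g_0$ on $\Delta_0\cap\Delta_j$. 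Property (c) is precisely the single-good-pair matching of \cite{Kuzman2021}, and since the overlaps $\Delta_0\cap\Delta_j$ are pairwise disjoint the $m$ matchings are imposed independently on the single operator $Q_{\varphi_0}$. Running the simultaneous Newton iterations $u_0^{N+1}=u_0^N-Q_{\varphi_0}\mathcal{F}_{J_0}(u_0^N)$ and $u_j^{N+1}=u_j^N-Q_{\varphi_j}\mathcal{F}_{J_j}(u_j^N)$ from $\varphi_0,\varphi_j$, the smallness of the initial errors and the implicit function theorem of \cite[Section 2]{Kuzman2021} give $W^{1,p}$-convergence to $J_0$- resp. $J_j$-holomorphic limits $\widehat{u}_0,\widehat{u}_j$ with $\left\|\widehat{u}_0-u_0\right\|_{W^{1,p}}\le 2\left\|Q_{\varphi_0}\right\|_{op}\left\|\mathcal{F}_{J_0}(\varphi_0)\right\|_{L^p}$ and similarly for $\widehat{u}_j$, which is (ii) once $\delta$ is small; (iii) follows from (b); and an induction using (c) — the errors $\mathcal{F}_{J_0}(u_0^N),\mathcal{F}_{J_j}(u_j^N)$ stay compatible over $\Delta_0\cap\Delta_j$ as long as $u_0^N=\Psi^j(u_j^N)$ there, a condition preserved by the matched corrections — yields (i) in the limit.

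The hard part will be this last step: producing right inverses that are at once uniformly bounded over the $W^{1,p}$-\emph{unbounded} families $\mathcal{W}_j$, compatible with the interpolation point $0$, and — above all — faithful to the transition maps $\Psi^j$ at the level of the linearized corrections, so that the separate Newton limits assemble into $J$-holomorphic discs agreeing on every overlap. The uniformity over unbounded families is the genuinely new point compared with \cite{Kuzman2021}, where precompactness was available, and it is what makes the explicit LQJ-bookkeeping of the second step necessary. By contrast, the compatibility (c) is a localized, good-pair-by-good-pair matter because the $\Delta_j$ are disjoint, so passing from one good pair to $m$ of them introduces no essential new difficulty there.
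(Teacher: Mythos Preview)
Your proposal is correct and follows essentially the same approach as the paper, which treats this theorem as a corollary of \cite[Theorem~2]{Kuzman2021} and merely records the three refinements (passing to $m$ good pairs via simultaneous iterations, replacing the precompactness hypothesis by the LQJ-condition, and adding the pointwise constraint at $0$). You have in fact spelled out more of the mechanics than the paper does; one minor quibble is that the nonlinear remainder $R$ in your pregluing estimate is linear rather than quadratic in $\delta$, but this does not affect the argument since only $\left\|\mathcal{F}(\varphi_0)\right\|_{L^p}\le C\delta$ is needed.
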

\noindent The corresponding solution of the non-linear Cousin problem provided by this theorem is $\widehat{v}\in \mathcal{O}_J(\D, M)$ given by $(\phi^0)^{-1}\circ \widehat{u}_0$ on $\Delta_0$ and by $(\phi^j)^{-1}\circ \widehat{u}_j$ on $\Delta_j$'s. Due to the properties of the local holomorphic maps which are of class $W^{1,p}$, $p>2$, it is continuous up to the boundary. 

This theorem is a corollary of \cite[Theorem 2]{Kuzman2021}, hence we omit the proof. However, we point out three refinements that we have made. First, we work on $m\geq 2$ good pairs. This can be obtained with $m+1$ simultaneous local Newton-type iterations or by induction. Secondly, in the original reference, the families $\mathcal{W}_j\subset \mathcal{O}_{J_j}(\Delta_j,U_j)$ are assumed to be $W^{1,p}$-precompact. As explained after the Corollary \ref{local coro} such an assumption implies the $LQJ_j$-conditions which are needed for the convergence of the local Newton-type iterations. However, in the present paper, the sets $\mathcal{W}_j\subset \mathcal{O}_{J_j}(\Delta_j,U_j)$ we will consider are $W^{1,p}$-unbounded. Therefore, we will check the $LQJ_j$-conditions directly. Finally, we have added the pointwise condition $\widehat{u}_0(0)=u_0(0)$. Once again, adding such a  constraint is standard by \cite{ST1}.

\section{Approximately attaching a disc to a torus}
Let $u\in\mathcal{O}_{J_{st}}(\overline{\D},\C^n)$ be an usual holomorphic discs and let $G\colon \overline{\D}^2\to \C^n$ be a smooth family such that
\begin{itemize}
\item[i)] $G(z,0)=u(z)$, $z\in\overline{\D}.$
\item[ii)] The map $w\to G(z,w)$ in $J_{st}$-holomorphic for every $z\in \overline{\D}$.
\end{itemize}
We seek a $J_{st}$-holomorphic disc $h$ centered in $u(0)$ and such that its boundary set $h(\partial\D)$ is close to $G(\partial\D\times \partial\D)$. As mentioned in the introduction, this question is usually referred as a Riemann-Hilbert problem. In this, local and usual holomorphic setting, its solution is simple. We take $\varphi^N(\zeta)=G(\zeta,\zeta^N)$ for large $N\in\mathbb{N}$. Such a map admits the required properties but is not holomorphic. However, its $\bar{\partial}_{J_{st}}$-derivative is $L^p$-small on $\D$, therefore one can approximate it with a holomorphic map $h$ that is $\mathcal{C}^0$-close to $\varphi^N$ on $\overline{\D}$. Indeed, when $J\equiv J_{st}$ we have $A\equiv 0$ hence the $LQJ$-condition presented in \S 1.1 is satisfied trivially.

When passing to the non-linear case the above approach meets an obstruction. In particular, the family of maps $\varphi^N$ is not precompact in $W^{1,p}(\D)$ since the $L^p$-norm of the usual complex derivative of $\varphi^N$ explodes when $N\to\infty$. Thus approximating $\varphi_N$ by a $J$-holomorphic map needs more care. In \cite{PROPER, ST2} a solution of the Riemann-Hilbert problem was constructed directly by taking $h(\zeta)=G\left(\zeta e^{\rho(\zeta)},\zeta^N e^{\gamma(\zeta)}\right)$ for large $N\in \N$ and suitable reparametrization functions $\rho$ and $\gamma$. However, for $(\R^{2n},J)$, $n\geq 3$, this approach fails since the system of PDEs for $\rho$ and $\gamma$ becomes overdeterminated. Therefore, our idea is to work in special coordinates, in which the last column of the complex matrix $A$ vanishes. That is, we construct local charts in which $w\to (z',w)$ is $J$-holomorphic for every $z'\in \C^{n-1}$. We prove that in this setting the family of maps $\varphi^N(\zeta)=(\zeta,0,\ldots,0,\zeta^N)$, $N\in\N$, admits the $LQJ$-condition. Thus, for large $N\in\N$ its elements may be approximated by $J$-holomorphic maps.

Finally, we seek solution of the Riemann-Hilbert problem on a manifold. Thus our proof involves gluing techniques presented in \S 1.2. This particular construction was inspired by the work of Rosay \cite{Rosay2} in the integrable case.

\begin{theorem}\label{propgl}
Let $(M,J)$ be a smooth almost complex manifold. We fix $\epsilon>0$, $m\in\mathbb{N}$ and an embedding $v\in \mathcal{O}_J\left(\overline{\D}, M\right)$. Let $G\colon \partial\D\times\overline{\D} \to M$ be a smooth map given by $G(z,\zeta)=v_z(\zeta)$ where for every $z\in \partial\D$ the fiber $v_z\in \mathcal{O}_J\left(\overline{\D}, M\right)$ is centered at $v_z(0)=v(z)$. For $1\leq j\leq m$ let $I_j\subset \partial\D$ be disjoint closed arcs and let $V_j\subset M$ be open sets such that $\left|\partial\D\setminus\cup_{j=1}^m I_j\right|<\epsilon$ and
$G(I_j,\overline{\D})\subset V_j.$ Moreover, for some open sets $U_j\subset \C^n$, we consider local charts $\phi^j\colon V_j\to U_j$ in which we have the following properties:
\begin{itemize}
\item[i)] the  structure $J_j=d\phi^j_*(J) \in \mathcal{J}(U_j)$; 
\item[ii)] the map $\zeta \mapsto v(\zeta)$ coincides with  $u_0^j(\zeta)=(\zeta,0,\ldots,0)$;
\item[iii)] the map $\zeta \mapsto G(z,\zeta)$ coincides with $\zeta \mapsto (z,0,\ldots,\zeta)$ for $z\in I_j$;
\item[iv)] the fibers $\zeta \mapsto(z_1,z_2,\ldots,z_{n-1},\zeta)$ are $J_j$-holomorphic, that is, the complex matrix $A_j$ associated to $J$ by $\phi^j$ has a vanishing last column. 
\end{itemize} 
Then there exist a set $E\subset\partial\D$ of measure $|E|<\epsilon$ and a map $h\in \mathcal{O}_J\left(\overline{\D}, M\right)$ such that $h(0)=v(0)$ and $\textrm{dist}(h,G(\partial\D \times\partial \D))<\epsilon$ on $\partial\D\backslash E$. 
\end{theorem}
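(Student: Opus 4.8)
The plan is to reduce the global Riemann--Hilbert problem on $(M,J)$ to a non-linear Cousin problem solvable by Theorem \ref{Cousin}, after first building local solutions over each arc $I_j$ using the special coordinates guaranteed by hypothesis (iv). First I would set up the disc decomposition: partition $\partial\D$ into the arcs $I_j$ (where $G$ sits in a chart $V_j$ with the good coordinates) and the small complementary set, and choose a family of smoothly bounded simply connected domains $\Delta_0 \ni 0$ and $\Delta_j$, $1\le j\le m$, with $\D=\Delta_0\cup(\cup_j\Delta_j)$, forming $m$ good pairs as in $\S1.2$, arranged so that $\overline{\Delta_0\cap\partial\D}$ avoids the $I_j$'s while $\Delta_j\cap\partial\D$ is a slightly enlarged neighborhood of $I_j$ inside the region where coordinates (i)--(iv) hold. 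The central disc $\Delta_0$ will carry (a $J_0$-holomorphic perturbation of) the original disc $v$ itself, so $\widehat u_0$ will be close to $u_0^0=(\zeta,0,\ldots,0)$, which immediately gives $h(0)=v(0)$ via conclusion (iii) of Theorem \ref{Cousin}.

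Next I would construct, for each $j$, the local candidate disc over $\Delta_j$ and verify the $LQJ_j$-condition for the relevant family. Working in the chart $\phi^j$, by (ii)--(iii) the fiber map is $\zeta\mapsto(z,0,\ldots,0,\zeta)$ for $z\in I_j$, and by (iv) the last column of $A_j$ vanishes, so the maps $\varphi^N_j(\zeta)=(g_j(\zeta),\zeta^N)$ --- where $g_j\colon\Delta_j\to\C^{n-1}$ is a fixed $J_j$-holomorphic disc approximating the first $n-1$ coordinates of $G(z,\cdot)$ along $\partial\D$, reparametrized so that its boundary traces the arc $I_j$ --- have $\bar\partial_{J_j}$-derivative whose $L^p$-norm is $O(1)$ (not exploding in $N$): indeed $\mathcal{F}(\varphi^N_j)=A_j(\varphi^N_j)\overline{(\varphi^N_j)_\zeta}$ and the $\overline{(\varphi^N_j)_\zeta}$ term in the last slot, which is the only one growing like $N|\zeta|^{N-1}$, is killed by the vanishing last column of $A_j$. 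I would then check that $\{\varphi^N_j : N\in\N\}$ satisfies the $LQJ_j$-condition: the local Lipschitz constant $L$ in (\ref{eqlip}) depends only on the $W^{1,p}$-norm of $\varphi^N_j$, which is again controlled uniformly because the dangerous derivative enters $D_{\varphi}$ only through the coefficient $\overline{\varphi_\zeta}$ multiplied by $\partial A_j/\partial z_k$ for $k\le n-1$ and $\partial A_j/\partial\bar z_k$ --- and here too the relevant entries come from the vanishing last column, so they do not see $\zeta^N$; for the uniform right-inverse bound $Q$ one invokes the analysis in \cite{BERKUZ} together with the structure of $D_\varphi$ in these coordinates. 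Applying Corollary \ref{local coro}, for $N$ large each $\varphi^N_j$ is $W^{1,p}$-close to a genuine $u_j^N\in\mathcal{O}_{J_j}(\Delta_j,U_j)$, and $\mathcal{C}^0$-close up to the boundary; set $\mathcal{W}_j=\{u_j^N : N\ \text{large}\}$, which then also satisfies the $LQJ_j$-condition.

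Then I would match data on the overlaps and invoke Theorem \ref{Cousin}. On each $\Delta_0\cap\Delta_j$ the disc $v$ (in coordinates $u_0^j=(\zeta,0,\ldots,0)$) and the disc $u_j^N$ (close to $(g_j(\zeta),\zeta^N)$) must be made $W^{1,p}$-close after the transition map $\Psi^j$: this is arranged by choosing $g_j$ to agree to high order with $(\zeta\mapsto v(\zeta))$ read in the $j$-th chart over $\Delta_0\cap\Delta_j$ and by noting $\zeta^N$ is uniformly small there once $\overline{\Delta_0\cap\Delta_j}$ is chosen to stay in $\{|\zeta|<1-\eta\}$ for a fixed $\eta>0$ --- wait, one must be slightly careful: $\Delta_0\cap\Delta_j$ may touch $\partial\D$, so instead I would arrange the overlaps $\Delta_0\cap\Delta_j$ to lie in the \emph{interior} of $\D$, pushing the arcs so the boundary of $\D$ is covered only by the $\Delta_j$'s, in which case $|\zeta|<1$ strictly on the compact overlap and $\zeta^N\to0$ uniformly. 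With the hypothesis $\|u_0-\Psi^j(u_j^N)\|_{W^{1,p}(\Delta_0\cap\Delta_j)}<\delta$ thus verified for $N$ large, Theorem \ref{Cousin} yields $\widehat u_0,\widehat u_j$ gluing to $h=\widehat v\in\mathcal{O}_J(\overline\D,M)$ with $h(0)=v(0)$ and with $h$ $\mathcal{C}^0$-close to $v$ on $\Delta_0$ and to $(\phi^j)^{-1}(g_j(\zeta),\zeta^N)$ on $\Delta_j$. Finally, on $\partial\D\cap\Delta_j$ one has $|\zeta|=1$, so $\varphi^N_j(\zeta)=(g_j(\zeta),\zeta^N)$ has its last coordinate on $\partial\D$, and by construction of $g_j$ its boundary values trace $G(z,\partial\D)$ up to error $\epsilon$ for $z$ in (most of) $I_j$; the bad set $E$ consists of $\partial\D\setminus\cup_j I_j$ together with the $\mathcal{C}^0$-error sets from the approximations, all of total measure $<\epsilon$ after shrinking. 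I expect the main obstacle to be the uniform-in-$N$ verification of the $LQJ_j$-condition for the unbounded family $\{\varphi^N_j\}$ --- i.e., showing both the Lipschitz constant $L$ and the right-inverse norm bound $Q$ stay uniform despite $\|\varphi^N_j\|_{W^{1,p}}$ being bounded but the family being genuinely non-precompact --- since this is precisely the point where the vanishing-last-column coordinates (iv) must be exploited quantitatively and where the earlier, lower-dimensional techniques of \cite{PROPER,ST2} break down.
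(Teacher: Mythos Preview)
Your overall strategy --- decompose $\D$ into $m$ good pairs, build local approximate solutions over each $\Delta_j$ exploiting the vanishing-last-column coordinates, verify the $LQJ_j$-condition, and glue via Theorem~\ref{Cousin} --- is exactly the paper's. There is, however, a genuine gap in your choice of the oscillating last coordinate.

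The fix you propose (``arrange the overlaps $\Delta_0\cap\Delta_j$ to lie in the interior of $\D$, pushing the arcs so the boundary of $\D$ is covered only by the $\Delta_j$'s'') is geometrically impossible once $m\ge2$. The $\Delta_j$ are pairwise disjoint open subsets of $\D$, so their union is disconnected and therefore cannot cover any annulus $\{r<|\zeta|<1\}$. Hence $\Delta_0$ must itself reach out to $\partial\D$ in the gaps between the arcs, and the overlaps $\Delta_0\cap\Delta_j$ are forced to accumulate on $\partial\D$. At such points $|\zeta^N|=1$, so $\zeta^N$ does not tend to zero on $\overline{\Delta_0\cap\Delta_j}$ and your matching hypothesis $\|u_0-\Psi^j(u_j^N)\|_{W^{1,p}(\Delta_0\cap\Delta_j)}<\delta$ fails for every $N$. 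The paper resolves this by replacing $\zeta^N$ with $h_j^N(\zeta)=(h_j(\zeta))^N$, where $h_j$ is the Riemann map of a \emph{larger} Jordan domain $\Omega_j\supset\Delta_j$ onto $\D$ chosen so that $\partial\Omega_j\cap\partial\Delta_j=I_j$. Then $|h_j|<1$ on all of $\overline{\Delta_j}\setminus I_j$, hence $h_j^N\to0$ uniformly on $\overline{\Delta_0\cap\Delta_j}$ even where the overlap touches $\partial\D$, while $|h_j^N|=1$ precisely on $I_j$. With this ansatz the candidate is simply $\varphi_j^{c,N}(\zeta)=(\zeta,0,\ldots,0,\,c\,h_j^N(\zeta))$, so $(\varphi_j^{c,N})'=(u_0^j)'$ exactly and your auxiliary map $g_j$ is unnecessary.

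Two smaller points. First, an $O(1)$ bound on $\|\mathcal F_j(\varphi_j^N)\|_{L^p}$ is not enough for Corollary~\ref{local coro}; you need it to tend to zero. With the correct ansatz this does hold, since $\mathcal F_j(\varphi_j^{c,N})=A_j'(\varphi_j^{c,N})\overline{(u_0^j)'_\zeta}$, $A_j'(u_0^j)\overline{(u_0^j)'_\zeta}=0$ by $J_j$-holomorphicity of $u_0^j$, and $\varphi_j^{c,N}\to u_0^j$ in $L^p$. Second, your LQJ verification is on the right track but too vague on the uniform right-inverse bound. The paper isolates this as a separate lemma: under the hypotheses $\|\varphi_j-u_0^j\|_{L^p}<c_j$ and $\|\varphi_j'-(u_0^j)'\|_{W^{1,p}}<c_j$ one shows directly that $\|D_{\varphi_j}-D_{u_0^j}\|_{op}$ is small, and then $Q_{\varphi_j}=Q_{u_0^j}(D_{\varphi_j}Q_{u_0^j})^{-1}$ is a right inverse with $\|Q_{\varphi_j}\|_{op}\le 2\|Q_{u_0^j}\|_{op}$ by a Neumann series.
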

\noindent As pointed out, opposed to the above mentioned results \cite{PROPER, ST2}, our theorem requires the existence of special coordinate neighborhoods $\phi^j$ in which the corresponding families satisfy the $LQJ_j$-conditions. These are needed in order to apply Theorem \ref{Cousin} and can not be established without property iv) in the above theorem. Moreover, in contrast with \cite{PROPER, ST2}, the boundary  $h(\partial \D)$  of the disc we obtain is not entirely approximately attached to the given torus. Indeed, the values $h(E)$ may be far from $G(\partial\D\times\partial\D)$. Nevertheless, since $|E|$ is arbitrarily small such a solution of the Riemann-Hilbert problem is enough to establish Theorem \ref{main theorem}.

\begin{proof}
Fix $\epsilon>0$. For $\alpha>0$ we define the sets
$$\Delta_0=\left\{ z\in \mathbb{D}; \textrm{dist}\left(z,\cup_{j=1}^m I_j\right)>\alpha\right\},
\Delta_j=\left\{ z\in \mathbb{D}; \textrm{dist}\left(z,I_j\right)<2\alpha\right\}.$$
We set $\alpha$ small enough to ensure that the sets $\Delta_j$, $1\leq j\leq m$, are pairwise disjoint and that $v(\Delta_j)\subset V_j$. Moreover, we perturb their boundary slightly at the edges, so that we obtain $m$ smoothly bounded good pairs $(\Delta_0,\Delta_j)$ with the properties required in Theorem \ref{Cousin}.

Next, we make a transition into the local coordinates provided by $\phi^j$. In such coordinates, we have the local structure $J_j=d\phi^j_*(J)$, the corresponding complex matrix $A_j$ and  the generalized Cauchy-Riemann operator $\mathcal{F}_j$ defined in $(\ref{A})$. The image of the maps that we are working with are assumed to lie in $\phi^j(V_j)$. Therefore we simplify our notation concerning the spaces of functions and, e.g.  write $W^{1,p}(\Delta_j)=W^{1,p}(\Delta_j,\phi^j(V_j))$. Moreover, since the last column of $A_j$   vanishes, we introduce the notation $A_j=\left[A_j',0\right]\in\mathbb{C}^{n\times n}$ and $u=\left[u',u_n\right]^T\in\mathbb{C}^n$, where we have $A_j'\in \mathbb{C}^{n\times(n-1)}$, $u'\in\mathbb{C}^{n-1}$ and $u_n\in\mathbb{C}$. Note that we have
$$\mathcal{F}_j(u)=u_{\bar{\zeta}}+A_j'(u)\overline{u'_\zeta}=0.$$
Moreover, the linearization of $\mathcal{F}_j$ along $\varphi\in W^{1,p}(\Delta_j)$ is given by
$$D_{\varphi}(V)= V_{\bar{\zeta}}+A'_j(\varphi)\overline{V'_\zeta}+\sum_{k=1}^n\left(\frac{\partial A'_j}{\partial z_k}(\varphi)V_k'+\frac{\partial A'_j}{\partial \bar{z}_k}(\varphi)\overline{V_k}'\right)\overline{\varphi'_{\zeta}}.$$
The following lemma which holds only in these particular coordinates is crucial.

\begin{lemma}\label{c}
Suppose that we are given a family of maps $\varphi_j\in W^{1,p}(\Delta_j)$ mapping into a compact subset of $\C^{n}$ and satsifying 
$$\left\|\varphi_{j}-u_0^j\right\|_{L^p(\Delta_j)}<c_j\;\;\textrm{ and }\;\; \left\|\varphi'_j-(u_0^j)'\right\|_{W^{1,p}(\Delta_j)}<c_j.$$ 
Provided that $c_j>0$ is small enough, this family satisfies the $LQJ_j$-condition.  
\end{lemma}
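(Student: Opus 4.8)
The plan is to verify the two items of Definition~\ref{defilqj} directly for the family $\mathcal{W}_j=\{\varphi_j\}$, rather than invoking precompactness (which fails here, since the norms $\|\varphi_j\|_{W^{1,p}}$ are not controlled: the last coordinate $u_n$ can be large in $W^{1,p}$ while only $L^p$-close to $(u_0^j)_n$). The key structural fact is that, in the special coordinates given by property iv) of Theorem~\ref{propgl}, the operators $\mathcal{F}_j$ and $D_\varphi$ involve only the \emph{truncated} matrix $A_j'$ and, crucially, only the \emph{primed} part $\overline{\varphi'_\zeta}$ of the derivative of $\varphi$ as a multiplicative factor. The full derivative $\varphi_\zeta$ never appears; hence the only quantity that must be controlled for the estimates in \cite{BERKUZ} is $\|\varphi'_j\|_{W^{1,p}}$, together with the sup-norm of $\varphi_j$ (which is bounded because the images lie in a fixed compact set). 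The hypothesis $\|\varphi'_j-(u_0^j)'\|_{W^{1,p}(\Delta_j)}<c_j$ gives exactly the needed uniform bound on $\|\varphi'_j\|_{W^{1,p}}$.

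First I would fix the reference map $u_0^j(\zeta)=(\zeta,0,\ldots,0)$ and note that $D_{u_0^j}$ is a fixed operator with a bounded right inverse $Q_{u_0^j}$ of norm, say, $\leq Q_0$, which can moreover be arranged to satisfy $Q_{u_0^j}(0)=0$ (as recalled at the end of \S1.1). Then for any $\varphi_j$ as in the statement, the local Lipschitz estimate \eqref{eqlip} — reread with $A_j$ replaced by $A_j'$ and the derivative factor replaced by $\overline{\varphi'_\zeta}$ — shows that $\|D_{\varphi_j}-D_{u_0^j}\|_{op}$ is bounded by a constant times $\|\varphi_j-u_0^j\|$ measured in the \emph{mixed} norm that appears in the hypothesis, namely $\|\varphi_j-u_0^j\|_{L^p}+\|\varphi'_j-(u_0^j)'\|_{W^{1,p}}$; this is $<C'c_j$. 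The point is that the full difference $\|\varphi_j-u_0^j\|_{W^{1,p}}$ is \emph{not} needed on the right-hand side precisely because of the special form of $\mathcal{F}_j$. Choosing $c_j$ small enough that $C'c_j\cdot Q_0\leq \tfrac12$, a Neumann-series argument shows $D_{\varphi_j}$ is invertible with right inverse $Q_{\varphi_j}=Q_{u_0^j}(\mathrm{Id}+(D_{\varphi_j}-D_{u_0^j})Q_{u_0^j})^{-1}$ of norm $\leq 2Q_0=:Q$, giving item ii); and it has a uniform local Lipschitz constant $L$ (again because the estimates only see $A_j'$, its first derivatives, and the sup-norms, all of which are uniformly bounded on the fixed compact set), giving item i).

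The main obstacle — and the reason the lemma is stated only in these coordinates — is to carry out the Lipschitz/invertibility estimate \emph{without} controlling $\|\varphi_j\|_{W^{1,p}}$. In the general (non-truncated) setting of \eqref{eqlip}, the constant $L$ depends on $\|\varphi\|_{W^{1,p}}$, so an unbounded family would have $L\to\infty$ and the argument would collapse. Here one must reexamine the proof of Theorem~4 in \cite{BERKUZ} line by line and observe that every occurrence of $\varphi_\zeta$ is in fact an occurrence of $\varphi'_\zeta$, so that all constants can be taken to depend only on $\|\varphi_j'\|_{W^{1,p}}$ and $\|\varphi_j\|_{L^\infty}$ — both uniformly bounded under our hypotheses. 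Once this observation is made, the remaining steps (Neumann series for the right inverse, the pointwise normalization $Q_{\varphi_j}(0)=0$, and the uniform Lipschitz bound) are routine and I would only sketch them, citing \cite{BERKUZ,Kuzman2021} for the detailed computations.
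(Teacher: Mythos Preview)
Your proposal is correct and follows essentially the same route as the paper: exploit the vanishing of the last column of $A_j$ so that $D_\varphi$ depends only on $\varphi'_\zeta$, show $\|D_{\varphi_j}-D_{u_0^j}\|_{op}$ is small using only the $L^p$-closeness of $\varphi_j$ to $u_0^j$ and the $W^{1,p}$-closeness of $\varphi_j'$ to $(u_0^j)'$, then build $Q_{\varphi_j}$ by a Neumann series and obtain the uniform Lipschitz constant analogously. The paper carries this out by an explicit decomposition $D_{\varphi_j}(V)-D_{u_0^j}(V)=I(V)+II(V)+III(V)$ and estimates each term directly with H\"older, rather than by re-reading the proof in \cite{BERKUZ}; but the content and the mechanism are the same as what you describe.
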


\begin{proof}[Proof of Lemma \ref{c}] Let $V\in W^{1,p}(\Delta_j)$. We have
$$D_{\varphi_j}(V)-D_{u_0^j}(V)=I(V)+II(V)+III(V),$$
where
$$I(V)=\left(A_j'(\varphi_j)-A_j'(u_0^j)\right)\overline{V'_\zeta},$$
$$II(V)=\sum_{k=1}^n\left(\frac{\partial A'_j}{\partial z_k}(\varphi_j)\overline{(\varphi_j')_{\zeta}}-\frac{\partial A'_j}{\partial z_k}(u_0^j)\overline{(u_0^j)'_{\zeta}}\right)V_k,$$
$$III(V)=\sum_{k=1}^n\left(\frac{\partial A'_j}{\partial \bar{z}_k}(\varphi_j)\overline{(\varphi'_j)_{\zeta}}-\frac{\partial A'_j}{\partial \bar{z}_k}(u_0^j)\overline{(u_0^j)'_\zeta}\right)\overline{V_k}.$$

Applying the H\"older inequality we get
$$\left\|I(V)\right\|_{L^{p}(\Delta_j)}\leq\left\|A_j'(\varphi_j)-A'_j(u_0^j)\right\|_{L^{2p}(\Delta_j)}\left\|\overline{V_\zeta}\right\|_{L^{2p}(\Delta_j)}.$$
Moreover, $\Delta_j$ is relatively compact hence the $L^{2p}(\Delta_j)$-norm of a given function may be bounded from above by its $L^{p}(\Delta_j)$-norm multiplied by $\textrm{area}(\Delta_j)^{\frac{1}{2p}}$. If we apply this fact twice, we obtain    
$$\left\|I(V)\right\|_{L^{p}(\Delta_j)} \leq\textrm{area}(\Delta_j)^{\frac{1}{p}}\left\|A'_j(\varphi_j)-A'_j(u_0^j)\right\|_{L^{p}(\Delta_j)}\left\|V\right\|_{W^{1,p}(\Delta_j)}.$$
Since $\varphi_j$ is assumed to be $L^p$-close to $u_0^j$, the operator norm of $I(V)$ can be made as small as desired. Indeed, the images of all such $\varphi_j$ are contained in a compact subset of $\mathbb{C}^n$ on which $A_j'$ is a smooth matrix map.   

Furthermore, since $\left\|(u_0^j)'_\zeta\right\|_{L^\infty\left(\overline{\Delta_j}\right)}=1$ we have
$$\left\|II(V)\right\|_{L^{p}(\Delta_j)}\leq \sum_{k=1}^n\left(\left\|\frac{\partial A'_j}{\partial z_k}(\varphi_j)-\frac{\partial A'_j}{\partial z_k}(u_0^j)\right\|_{L^p(\Delta_j)}\left\|V\right\|_{L^{\infty}(\overline{\Delta_j})}\right)+$$
$$+\sum_{k=1}^n\left(\left\|\frac{\partial A'_j}{\partial z_k}(\varphi_j)\right\|_{L^\infty(\overline{\Delta_j)}}\left\|(\varphi'_j)_\zeta-(u_0^j)'_\zeta\right\|_{L^{p}(\Delta_j)}\left\|V\right\|_{L^{\infty}(\overline{\Delta_j})}\right).$$
The $L^p$-norms in the first sum can be assumed to be small by a similar argument as above. The same is true for the second sum since $\varphi_j'$ is $W^{1,p}$-close to $(u_0^j)'$. Hence, by the Sobolev embedding theorem stating that $W^{1,p}(\Delta_j)\subset L^{\infty}\left(\overline{\Delta_j}\right),$ the operator norm of $II(V)$ can be made as small as desired. The same conclusion can be obtained for $III(V)$. 

Provided that $c_j>0$ is small we have proved that $D_{\varphi_j}$ and $D_{u_0^j}$ are arbitrarily close in the operator norm. Let $Q_{u_0^j}$ be the right inverse for $D_{u_0^j}$. Assume that $c_j>0$ is  small enough to ensure 
$$\left\|Id-D_{\varphi_j}Q_{u_0^j}\right\|_{op}\leq \frac{1}{2}.$$
Then the following operator
$$Q_{\varphi_j}=Q_{u_0^j}\left(D_{\varphi_j}Q_{u_0^j}\right)^{-1}$$
is a right inverse for $D_{\varphi_j}$ and satsifies $\left\|Q_{\varphi_j}\right\|_{op}\leq 2\left\|Q_{u_0^j}\right\|_{op}$. We have provided uniformly bounded right inverses $Q_{\varphi_j}$.

Assume now that $\left\|\varphi_j-\tilde{\varphi_j}\right\|_{W^{1,p}(\Delta)}<1$. We seek a bound $L_j>0$ depending only on $c_j>0$ and satisfying
$$\left\|D_{\varphi_j}-D_{\tilde{\varphi_j}}\right\|_{op}\leq L_j\left\|\varphi_j-\tilde{\varphi}_j\right\|_{W^{1,p}(\Delta)}.$$
Such a constant can be provided along the same above computations. The only difference is that the $L^p$-difference between $A_j'(\varphi_j)$ and $A_j'(\tilde{\varphi}_j)$ is not small; it can, however, be  uniformly bounded. The same 
applies to the $L^p$-norm of $\frac{\partial A'_j}{\partial z_k}(\varphi_j)-\frac{\partial A'_j}{\partial \bar{z}_k}(\tilde{\varphi}_j)$. 
\end{proof}

This lemma provides the necessary bounds for local $J_j$-holomorphic approximation and the solution of the non-linear Cousin problem. In particular, our strategy is to define a set of almost holomorphic maps $\varphi_j^{c,N}$, $c\in\partial\D$, $N\in\mathbb{N}$, that are $L^p$-close to $u_0^j$ on $\Delta_j$ and have the desired properties (the additional parameter $c\in\partial \D$ is needed in order to peroform the so-called Poletsky's trick in the proof of the main theorem in \S 3). After that, we will provide their $J_j$-holomorphic approximations and then glue these maps with $v$ using Theorem \ref{Cousin}. 
We point out again that it is  essential that we require  only $\left(\varphi_j^{c,N}\right)'$ to be $W^{1,p}$-close to $\left(u_0^j\right)'$. Indeed, derivatives of the $n$-th component in $\varphi_j^{c,N}$ will grow linearly with $N$.

Let $\Omega_j$, $1\leq j\leq m$, be a range of smooth Jordan domains in $\C$ such that $\Delta_j\subset \Omega_j$, $\partial\Omega_j\cap \partial\Delta_j=I_j$, $v(\Omega_j)\subset V_j$ and $\overline{\Omega_j}\cap \overline{\Delta}_k=\emptyset$ for $k\neq j$. Denote by $h_j$ the Riemann  map between $\overline{\Omega_j}$ and $\overline{\D}$, and  define for $\zeta\in \overline{\Delta}_j$ and $N\in\mathbb{N}$
\begin{equation}\label{hn}
h^N_j(\zeta)=\left(h_j(\zeta)\right)^N.
\end{equation}
These functions are holomorphic in the usual sense and map into $\overline{\mathbb{D}}$. Moreover, note that $|h^N_j|=1$ on $I_j$ and $|h^N_j|\to 0$ uniformly on every compact subset of $\overline{\Delta}_j\setminus I_j$ when $N\to\infty.$    

Fix $c\in\partial \mathbb{D}$. For $\zeta\in \overline{\Delta}_j$, we now define
$$\varphi_j^{c,N}(\zeta)=\left(\zeta,0,\ldots,0, c \cdot h^N_j(\zeta)\right).$$
Due to the properties of $h_j^{N}$, we have the following limit as $N\to\infty$
$$\left\|\varphi_j^{c,N}-u_0^j\right\|_{L^p(\Delta_j)}\to 0.$$
Since $\left(\varphi_j^{c,N}\right)'=(u_0^j)'$, we have 
$$\mathcal{F}_j\left(\varphi_j^{c,N}\right)=A_j\left(\varphi_j^{c,N}\right)\overline{(\varphi_j^{c,N})_\zeta}=A'_j\left(\varphi_j^{c,N}\right)\overline{(u_0^j)'_\zeta}.$$
Recall that the map $u_0^j$ is $J_j$-holomorphic which implies $A'_j(u_0^j)\overline{(u_0^j)'_\zeta}=0$. The images of $\varphi_j^{c,N}$ lie in a compact set on which $A$ is smooth. Hence, the $L^p$-norm of $\mathcal{F}_j\left(\varphi_j^{c,N}\right)$ tends to zero when $N\to\infty$. 

We have now reached the situation in which we can perform the local Newton-type iteration. Indeed, let $c_j>0$ be the constant from Lemma \ref{c}. We can fix $N$ large enough so that for every $c\in\partial \D$ we have
$$\left\|\varphi_j^{c,N}-u_0^j\right\|_{L^{p}(\Delta_j)}<\frac{c_j}{2}.$$ Since such a family of maps satisfies the $LQJ_j$-condition, by Corollary \ref{local coro}, there exist $J_j$-holomorphic maps $u_j^{c,N}\in W^{1,p}(\Delta_j)$ for which the $W^{1,p}$-norm of $u_j^{c,N}-\varphi_j^{c,N}$ is as small as desired. In particular, we need
$$\left\|u_j^{c,N}-\varphi_j^{c,N}\right\|_{W^{1,p}(\Delta_j)}<\epsilon',$$
where $\epsilon'>0$ is associated to $\epsilon>0$  and will be determined below. Moreover, we can assume that 
$$\left\|u_j^{c,N}-u_0^j\right\|_{L^{p}(\Delta_j)}<c_j\;\;\textrm{and}\;\; 
\left\|\left(u_j^{c,N}\right)'-(u_0^j)'\right\|_{W^{1,p}(\Delta_j)}<c_j.$$
This ensures that for large $N$ and every $c\in\mathbb{D}$ the family $\mathcal{W}_j$  of maps $u_j^{c,N}$ also satisfies the $LQJ_j$-condition. Moreover, due to the properties of the maps $h^N_j$, the $W^{1,p}(\Delta_0\cap\Delta_j)$-difference between $u_j^{c,N}$ and $u_0^j$ tends to zero as $N\to\infty$. Indeed, $h_j^N\to 0$ on compact subsets of $\overline{\Delta}_j\setminus I_j$ while the $W^{1,p}$-norm of $u_j^{c,N}-\varphi_j^{c,N}$ tends to zero by Corollary \ref{local coro}. At this stage, everything is set to apply Theorem \ref{Cousin} and solve the non-linear Cousin problem. 

Since $v$ is embedded its image admits a chart neighborhood $\phi^0\colon V_0\to U_0$ such that $d\phi^0_*(J)=J_{st}$ along $\phi^0\circ v\left(\overline{\D}\right)$ and that $\det\left(d\phi^0_*(J)+J_{st}\right)\neq 0$ on $U_0$ (see the appendix of \cite{IR}). Next, we set $u_0=\phi^0\circ v$, $J_0=d\phi^0_*(J)$, $U_0^j=\phi^0(V_0\cap V_j)$, $U_j^0=\phi^j(V_0\cap V_j)$ and $\Psi^j=\phi^0\circ (\phi^j)^{-1}:U_j^0\to U_0^j$. Note that $u_0^j=\Psi_j^{-1}\circ u_0=\phi^j \circ v$ on $\Delta_j$. Therefore, provided that $N$ is large, we can assume that
$$\left\|u_0-\Psi_j(u_j^{c,N})\right\|_{W^{1,p}(\Delta_0\cap\Delta_j)}<\delta,$$
where $\delta>0$ is the constant associated to $\epsilon'>0$ in Theorem \ref{Cousin}. Indeed, the maps $u_j^{c,N}$ are arbitrarily $W^{1,p}$-close to the maps $\varphi_j^{c,N}$, while the last are arbitrarily $W^{1,p}$-close to $u_0^j$ on $\Delta_0\cap\Delta_j$. Note also that the diffeomorphisms $\Psi_j$ are fixed and therefore we have the control over the size of their derivatives. Applying Theorem \ref{Cousin}, we can now provide maps $\widehat{u}_0\in \mathcal{O}_{J_0}(\Delta_0,U_0)$ and $\widehat{u}_j\in \mathcal{O}_{J_j}(\Delta_j,U_j)$, $1\leq j\leq m$, such that  
$$\left\|\widehat{u}_0-u_0\right\|_{W^{1,p}(\Delta_0)}<\epsilon'\textrm{  and  } \left\|\widehat{u}^{c,N}_j-u_j^{c,N}\right\|_{W^{1,p}(\Delta_j)}<
\epsilon'.$$ Moreover, we have $\widehat{u}_0(0)=u_0(0)$ and $\Psi_j(\widehat{u}_j^{c,N})=\widehat{u}_0$ on $\Delta_0\cap\Delta_j$. Thus, the $J$-holomorphic map $h:\overline{\D}\to M$ given by 
\begin{equation*}
h=
\begin{cases}
 \phi_0^{-1}\circ\widehat{u}_0 \ \ \ \  \mbox{on } \Delta_0\\
\phi_j^{-1}\circ \widehat{u}_j^{c,N} \ \ \mbox{on } \Delta_j\\
\end{cases}
\end{equation*}
is well defined and satisfies $h(0)=v(0)$. Finally, we chose $\epsilon'>0$ such that the estimate $$\left\|\widehat{u}^{c,N}_j-\varphi_j^{c,N}\right\|_{W^{1,p}(\Delta_j)}<2\epsilon'$$ implies   
$$\textrm{dist}\left(\phi_j^{-1}\circ \widehat{u}_j^{c,N}\left(\overline{\Delta_j}\right),\phi_j^{-1}\circ \varphi_j^{c,N}\left(\overline{\Delta_j}\right)\right)<\epsilon$$
for all $1\leq j\leq m$. It follows that $h$ is the map we seek. 
\end{proof}

\section{Proof of Theorem 1}
An upper semi-continuous function $f$ defined on an open set $V$ in $(M,J)$ is 
{\it $J$-plurisubharmonic} if 
$\displaystyle f \circ v$ is subharmonic for any $v\in\mathcal{O}_J\left(\overline{\D}, V\right)$. We denote by $\Psh(V)$ the set of $J$-plurisubharmonic
functions on $V$.

The  proof of Theorem \ref{main theorem} follows the one given in \cite{KUZMAN3} but uses the new solution of the Riemann-Hilbert problem presented in Section 2 in order to avoid dimensional restrictions. The reader should note that a large part of the original proof goes through in any dimension. Hence, we drop some details. In particular, we do not prove that $EP_f$ is upper semi-continuous in higher dimensions. This is clear by reading  p. $267$ in \cite{KUZMAN3}. Finally, since Theorem \ref{main theorem} was already proved in real dimension $4$, we will assume that the real dimension of $M$ is greater than $4$.

\begin{proof}[Proof of Theorem \ref{main theorem}]  Let $f$ be an upper semi-continuous function defined on $M$. Since the envelope $EP_f$ is upper semi-continuous it can be approximated from above by continuous functions. Hence we can assume the function $f$ to be
 continuous (see \cite[p. 8]{DF2}). We need to show that for every $p\in M$ with $EP_f(p)>-\infty$ and every smooth 
$J$-holomorphic embedding $v_p \in \mathcal{O}_J\left(\overline{\D}, V\right)$ centered at $p$, we have 
\begin{equation}
\label{to be proved}
 EP_f(p)=EP_f(v_p(0))\leq \int_{0}^{2\pi}
   EP_f\circ v_p(e^{i\theta})\frac{\mathrm{d}\theta}{2\pi}.
\end{equation}

Fix such a disc $v_p$ and a number $\eps>0$. Recall that $v_p$ is defined and $J$-holomorphic on some neighborhood $\mathbb{D}_{v_p}$ of $\overline{\D}$. Let $z'\in \partial\mathbb{D}.$ 
There exists a $J$-holomorphic disc $v_{z'}$ centered at $v_p(z')$ and transverse to $v_p$ which satisfies the following extremality condition
\begin{eqnarray}
\label{ekstremni}
\int_{0}^{2\pi} f\circ v_{z'}\left(e^{it}\right)\frac{\mathrm{d}t}{2\pi} < EP_f(v_p(z')) + \frac{\eps}{4\pi}.
\end{eqnarray}
Since $\dim_{\R}M\geq 6$ we may assume that $v_{z'}$ is embedded \cite{WYS}. Moreover, applying the deformation theory from \cite{ST1} one can perturb $v_{z'}$ and obtain a $J$-holomorphic disc centered at any $q\in M$ that is sufficiently close to $p$. Our strategy is to perform these perturbations in such a way that we will obtain the coordinate charts $\phi^j$ needed to apply Theorem \ref{propgl}. 

Let $V\subset M$ be a small neighborhood of the image $v_{z'}\left(\overline{\mathbb{D}}\right)$. Since $v_{z'}$ and $v_p$ are both embedded one can set a local chart $\phi\colon V\to \mathbb{C}^n$, in which the disc $\phi\circ v_{z'}$ agrees with $u_{z'}(\zeta)=(0,\ldots,0,\zeta)$. Moreover, for values close to $z'$ the disc $v_p$ is identified with the map $z \mapsto (z,0,\ldots,0)$ in such a way that $v_p(z')$ corresponds to the origin. It is proved in \cite[Theorem 1.3.]{ST1} that the manifold of $\phi_*(J)$-holomorphic discs  is a Banach manifold modeled on the space of usual $J_{st}$-holomorphic disc. 
Hence, given a point $q\in\mathbb{C}^{n-1}$ with a small norm, one can define a $J$-holomorphic disc $u_{q}$ centered at $(q,0)$.
Moreover, one can obtain a diffeomorphism $H(q,\zeta)\mapsto u_q(\zeta)$ with $\phi_*(J)$-holomorphic $\zeta$-fibers. This allows us to define the desired coordinate chart $\phi^{z'}=H\circ \phi$ on a possibly smaller neighborhood of $v_{z'}\left(\overline{\mathbb{D}}\right)$ contained in $V$. 

If $z\in\partial\mathbb{D}$ is close enough to $z'$ the disc $v^{z'}_z(\zeta)=(\phi^{z'})^{-1}\left(z,0,\ldots,\zeta\right)$ is well defined, centered at $v_p(z)$, and still satisfies the extremality condition $(\ref{ekstremni})$. By compactness, we cover $\partial{\mathbb{D}}$ with a finite number of open arcs 
$$I_{z'}=\left\{ z\in\partial\mathbb{D}; \textrm{ $v_{z}^{z'}$ is well defined}\right\}.$$ 
We shrink these sets into pairwise disjoint closed arcs $I_{z'_1}, I_{z'_2},\ldots, I_{z'_m}$ to ensure that the measure of $E=\partial\mathbb{D}\setminus \cup_{j=1}^mI_{z'_j}$ is small. These arcs are the one required in Theorem \ref{propgl} while  $\phi^{z'_j}$ are the the corresponding charts.

Finally, we define the map $G\colon \partial\D\times\overline{\mathbb{D}}\to M$. Let $J_{z'_1}, J_{z'_2},\ldots, J_{z'_m}\subset \partial\mathbb{D}$ be pairwise disjoint open arcs satsifying $I_{z'_j}\subset J_{z'_j}$. Moreover, we assume that $v_z^{z'_j}$ is defined for every $z\in J_{z'_j}$. Let $\chi\colon\partial\mathbb{D}\to [0,1]$ be a smooth cut-off function that is equal to $1$ on the intervals $I_{z'_j}$ and vanishes off the union of sets $J_{z'_j}$. We set $G(z,\zeta)=v^{z'_j}_z(\chi(z) \zeta)$ for $z\in  J_{z'_j}$ and $G(z,\zeta)=v_p(z)$ elsewhere. That is, for $z\in I_{z'_j}$ we have $G(z,\zeta)=v^{z'_j}_z(\zeta)$, while off these closed arcs we shrink these discs into constant maps. Note that for all $z \in \partial \D$ the disc $\zeta \mapsto G(z,\zeta)$ is $J$-holomorphic. In addition, 
since the measure of $E$ is small and almost every $\zeta$-fiber of $G$ satisfies 
$(\ref{ekstremni})$, we can assume that the following crucial inequality is fulfilled: 
\begin{equation}\label{pravi torus}
	\int_{0}^{2\pi}\!\! \int_{0}^{2\pi} 
	f\circ G(e^{i\theta},e^{it})\frac{\mathrm{d}t}{2\pi}\frac{\mathrm{d}\theta}{2\pi}
	< 	\int_{0}^{2\pi} EP_f \circ v_p(e^{i\theta})\frac{\mathrm{d}\theta}{2\pi}+\frac{\eps}{2}.
\end{equation}
 
Let $h$ be a $J$-holomorphic disc centered at $p$ and with its boundary approximately attached to $G(\partial\D\times\partial\D)$
given by Theorem \ref{propgl}.  By construction the distance between $h(\zeta)$ and $G\left(\zeta,c\cdot h_j^{N}(\zeta)\right)$ is small
for every $\zeta\in I_{z_j'}$, $c=e^{it} \in\partial\D$ and $N\in\N$ large enough. Here $h_j^{N}$ is the map defined in (\ref{hn}). Note that we also have $|h_j^{N}(\zeta)|=1$ for $\zeta \in I_{z_j'}$. Hence for values $e^{i\theta}$ outside the set $E$ we can write $h_j^{N}(e^{i\theta})=e^{i\beta_{N}(\theta)}$. Let us extend $\beta_N$ continuously to a map defined on $[0,2\pi]$ and satisfying $\beta_N(0)=\beta_N(2\pi)$. We claim that    
\begin{equation*}
	\int_{0}^{2\pi} f\circ h(e^{i\theta})\frac{\mathrm{d}\theta}{2\pi}
		< \int_{0}^{2\pi} f\circ G\left(e^{i\theta},e^{i(t+\beta_{N}(\theta))}\right) \frac{\mathrm{d}\theta}{2\pi} +\frac{\eps}{2}.
\end{equation*}
Indeed, for $\theta,t \in [0,2\pi]$ the values of $G\left(e^{i\theta},e^{i(t+\beta_{N}(\theta))}\right)$ lie in the torus $G(\partial\D\times\partial\D)$ while the values of $h(e^{i\theta})$ are by construction contained in a small neighborhood of the set $G\left(\overline{\D}\times\overline{\D}\right)$. Therefore, for $e^{i\theta}\in E$ the difference between these two terms can be uniformly bounded. The rest follows by the fact that the measure of $E$ is as small as desired.  

We now apply the key trick which is due to Poletsky \cite{Poletsky1993}.  We define for $t\in\left[0,2\pi\right]$
\[
	I(t)=\int_{0}^{2\pi} f\circ G\left(e^{i\theta},e^{i(t+\beta_{N}(\theta))}\right)
	\frac{\mathrm{d}\theta}{2\pi}.
\]
According to the mean value theorem there exists $\nu\in\left[0,2\pi\right)$ such that 
\begin{eqnarray*}
I(\nu) = \int_0^{2\pi}\!\!\!I(t)\;\frac{\mathrm{d}t}{2\pi}=\int_{0}^{2\pi}\!\!\!\!\int_{0}^{2\pi}
	f\circ G\left(e^{i\theta},e^{i(t+\beta_{N}(\theta))}\right)
	\frac{\mathrm{d}t}{2\pi}\frac{\mathrm{d}\theta}{2\pi} 
\end{eqnarray*}
Furthermore, after a change of variables in the double integral, we get
$$I(\nu)= \int_{0}^{2\pi}\!\!\!\!\int_{0}^{2\pi} 
	f\circ G(e^{i\theta},e^{it}) 
	\frac{\mathrm{d}t}{2\pi}\frac{\mathrm{d}\theta}{2\pi}.$$
Together with $(\ref{pravi torus})$ 
we then obtain  
\[
	EP_f(u_p(0))\leq \int_{0}^{2\pi} f\circ h(e^{i\theta})\frac{\mathrm{d}\theta}{2\pi} 
\leq I(\nu)+ \frac{\eps}{2} \leq  \int_{0}^{2\pi} EP_f\circ v_p(e^{i\theta})\frac{\mathrm{d}\theta}{2\pi} + \eps.
\]
Since $\eps>0$ is arbitrary, (\ref{to be proved}) holds and the theorem is proved.  
\end{proof}

\section{Applications}
In this section we provide a few applications of Theorem \ref{main theorem}. The proofs are omitted since they contain very few original ideas. 

\subsection{Regularization of $J$-plurisubharmonic functions} In real dimension four, Pli\'s proved in \cite{plis} that every $J$-plurisubharmonic function can be locally approximated by smooth $J$-plurisubharmonic functions. Along with his almost complex analogue of the Richberg theorem \cite{plis0}, the key ingredient was the low dimensional analogue of our main theorem given in \cite{KUZMAN3}. In fact, as observed by Pli\'s himself, this was the only reason why the result was proved in real dimension four. Therefore, as a corollary of Theorem \ref{main theorem} and \cite{plis0,plis} we obtain the following statement. 
\begin{coro}\label{theoapprox}
Let $(M,J)$ be an almost complex manifold and let $p\in M$. There exists a neighborhood $V$ of $p$ 
such that for every $J$-plurisubharmonic function $f$ on $V$ there exists a decreasing sequence of $J$-plurisubharmonic 
$\mathcal{C}^{\infty}$ functions $(f_N)_{N\in\mathbb{N}}$ on $V$ such that $f_N\rightarrow f$ when $N\to\infty$.
\end{coro}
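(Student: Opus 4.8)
The plan is to run the classical two-step regularization — approximate $f$ from above first by \emph{continuous} $J$-plurisubharmonic functions and then by smooth ones — precisely as Pli\'s does in real dimension four \cite{plis}. The only step of that argument which was previously restricted to $\dim_\R M=4$ is the $J$-plurisubharmonicity of the envelope of the Poisson functional, and that is now Theorem \ref{main theorem} in arbitrary dimension; so the proof of \cite{plis} carries over once Theorem \ref{main theorem} is substituted for its four-dimensional precursor \cite{KUZMAN3}. I only indicate the mechanism.

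\textbf{Choice of $V$ and Step 1.} Take $V$ to be a small coordinate ball around $p$, with $\overline V$ contained in a chart in which $J(p)=J_{st}$ (so that the structure lies in the relevant $\mathcal{J}(\cdot)$ near $p$ automatically). Given a $J$-plurisubharmonic $f$ on $V$, which we may assume $\not\equiv-\infty$, pick continuous functions $g_N$ on $V$ with $g_N\downarrow f$ (possible since $f$ is upper semi-continuous; cf. \cite[p.~8]{DF2}). Applying Theorem \ref{main theorem} to the almost complex manifold $(V,J)$, each $EP_{g_N}$ is $J$-plurisubharmonic on $V$ — and not $\equiv-\infty$, since $f\le EP_{g_N}$ — and, being the largest $J$-plurisubharmonic minorant of $g_N$, it obeys $f\le EP_{g_N}\le g_N$; since $g_{N+1}\le g_N$ we also have $EP_{g_{N+1}}\le EP_{g_N}$. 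Hence $EP_{g_N}\downarrow f$ on $V$. Finally $EP_{g_N}$ is continuous, not merely upper semi-continuous, because $g_N$ is — a standard point of the disc-envelope theory, checked in the almost complex setting as in \cite{KUZMAN3} (cf. the complex case). It therefore suffices to prove the Corollary for a \emph{continuous} $J$-plurisubharmonic $f$.

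\textbf{Step 2.} By our choice of coordinates $\rho:=|z|^2+2$ is smooth and, strict plurisubharmonicity being an open condition, strictly $J$-plurisubharmonic on $V$, with $\rho\ge 2$. For $\eps>0$ the function $f+\eps\rho$ is continuous and strictly $J$-plurisubharmonic, so Pli\'s's almost complex Richberg theorem \cite{plis0} yields a smooth strictly $J$-plurisubharmonic $f_\eps$ on $V$ with $f+\eps\rho\le f_\eps\le f+\eps\rho+\eps$; in particular $f_\eps>f$ and $\|f_\eps-f\|_{L^\infty(V)}\le\eps(1+\|\rho\|_{L^\infty(V)})$. Put $\eps_N=(2/3)^N$. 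Since $\rho\ge 2$ we have $\rho/(\rho+1)\ge 2/3$ on $V$, so $\eps_{N+1}(\rho+1)\le\eps_N\rho$, whence $f_{\eps_{N+1}}\le f+\eps_{N+1}(\rho+1)\le f+\eps_N\rho\le f_{\eps_N}$. Thus $f_N:=f_{\eps_N}$ is a decreasing sequence of smooth $J$-plurisubharmonic functions on $V$ converging uniformly to $f$, which is the claim.

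\textbf{Main obstacle.} There is no real obstacle: with Theorem \ref{main theorem} in hand in all dimensions, Step 1 is unchanged, and Step 2 lies entirely within \cite{plis0,plis}. The only points meriting a word are the continuity of $EP_{g_N}$ for continuous $g_N$ (routine, as recalled above) and the existence of the local smooth strictly $J$-plurisubharmonic $\rho$ (elementary, and the reason the statement is local). This is why, as stated in the paper, the proof ``contains very few original ideas'' and the detailed write-up is omitted.
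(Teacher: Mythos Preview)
Your proposal is correct and follows essentially the same route as the paper, which simply notes that Pli\'s's argument in \cite{plis0,plis} goes through once Theorem~\ref{main theorem} replaces its four-dimensional precursor \cite{KUZMAN3}. Your sketch spells out exactly that mechanism --- reduction to continuous $f$ via the Poisson envelopes $EP_{g_N}$, then smoothing via the almost complex Richberg theorem --- and the monotonicity device in Step~2 is a clean way to make the sequence decreasing.
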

\noindent It is worth pointing out that Harvey, Lawson and Pli\'s \cite{HLP} proved the corresponding global approximation result in  case when $M$ is $J$-pseudoconvex (see Theorem 4.1 in \cite{HLP}), that is, when $M$ admits a smooth strictly $J$-plurisubharmonic exhaustion function. In particular, they also obtain the above Corollary \ref{theoapprox} (see Corollary 4.2) since any almost complex manifolds admits a system of (strictly) $J$-pseudoconvex neighborhoods. However, their techniques are very different from the ones presented here. 
\subsection{$\Psh$-hull and Poletsky discs}

In addition to providing new $J$-pluri\-subharmonic functions by variational methods, Poletsky theory gives beautiful characterizations of hulls in term of $J$-holomorphic discs. We refer to \cite{Poletsky1993} for the most classical one in $(\C^n,J_{st})$.  
In our case, we can obtain a characterization of compact $\Psh$-hulls of compact sets in terms 
of $J$-holomorphic discs, similar to those obtained in \cite{LP,Larusson-Sigurdsson1998,Rosay1,DF2}. 

Let $K\subset (M,J)$ be a compact set. Its $\Psh$-hull is defined  by
$$\widehat{K}=\{p\in M; f(p)\leq {\rm sup}_K  \ f ,  \  \forall  f \in \Psh(M)\}.$$
Note that, if $M$ is $J$-pseudoconvex, it is enough to consider only continuous or smooth $J$-plurisubharmonic functions. This is due to the above mentioned result of Harvey, Lawson and Pli\'s (see Corollary 6.3 in \cite{HLP}). The following statement is standard and we refer to e.g. \cite{DF2} for its proof.

\begin{coro}\label{corohull}
Let $K\subset (M,J)$ be a compact set whose $\Psh$-hull $\widehat{K}$ is also compact. 
Let $V$ be a relatively compact open set containing $\widehat{K}$. Then $p \in M$ belongs to $\widehat{K}$ if and only if 
for any $\varepsilon>0$ and for any neighborhood $U\subset V$ of $K$, 
there exists a $J$-holomorphic disc $v\in\mathcal{O}_J\left(\overline{\Delta},M\right)$ centered at $p$ 
and a set $E\subset \partial \Delta$ of measure $|E|<\varepsilon$ such that $v(\partial \Delta \setminus E)\subset U$.                            
\end{coro}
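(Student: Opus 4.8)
\textbf{Proof proposal for Corollary \ref{corohull}.}

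The plan is to derive this hull characterization as a routine consequence of Theorem \ref{main theorem}, following the standard argument that appears in \cite{DF2} for locally irreducible complex spaces. The key bridge is the observation that, for a compact set $K$ with compact hull $\widehat K$, the upper envelope of the family of continuous $J$-plurisubharmonic functions bounded above by the characteristic function of $K$ (suitably relaxed) coincides with a Poisson-type envelope, so that the disc description of the hull becomes the disc description of where that envelope vanishes. Concretely, for a neighborhood $U\subset V$ of $K$ and $\varepsilon>0$ one works with a continuous function $f$ on $M$ that is, say, $0$ on a neighborhood of $\overline{U}$, equal to some large constant off a slightly larger set, and $\le 0$ everywhere, chosen so that $\sup_K f = 0$; then $EP_f$ is $J$-plurisubharmonic by Theorem \ref{main theorem} (it is not identically $-\infty$ since $f$ is real-valued), and $EP_f \le f \le 0$, so $EP_f \in \Psh(M)$ with $\sup_K EP_f \le 0$.

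First I would prove the easy implication: if $p\in M$ admits, for every $\varepsilon>0$ and every neighborhood $U\subset V$ of $K$, a disc $v$ centered at $p$ with $v(\partial\Delta\setminus E)\subset U$ for some $E$ of measure $<\varepsilon$, then for any $g\in\Psh(M)$ the subharmonicity of $g\circ v$ gives $g(p)=g(v(0))\le \frac{1}{2\pi}\int_{\partial\Delta} g\circ v \le (1-\tfrac{\varepsilon}{2\pi})\sup_U g + \tfrac{\varepsilon}{2\pi}\sup_V g$; letting $U$ shrink to $K$ (using upper semicontinuity of $g$ and compactness to control $\sup_U g$ by a quantity approaching $\sup_K g$) and then $\varepsilon\to 0$ yields $g(p)\le \sup_K g$, i.e. $p\in\widehat K$. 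One must be a little careful here with the parametrization: a $J$-holomorphic disc in the sense of the paper is defined on $\overline{\D}$, so "$v\in\mathcal O_J(\overline\Delta,M)$" should be read via a conformal identification $\Delta\cong\D$, and subharmonicity of $g\circ v$ and the mean value inequality are conformally invariant, so this is harmless.

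For the converse — the substantive direction — I would take $p\in\widehat K$, fix $\varepsilon>0$ and a neighborhood $U\subset V$ of $K$, build the continuous function $f$ described above, and apply Theorem \ref{main theorem} to get $EP_f\in\Psh(M)$. Since $p\in\widehat K$ and $\sup_K EP_f\le 0$ we get $EP_f(p)\le 0$; but also $f\ge -C$ is bounded below, hence $EP_f$ is bounded below, and in fact $EP_f(p)\ge -C|$something small, so $EP_f(p)$ is close to $0$ — more precisely, by definition of the envelope there is a disc $v$ centered at $p$ with $P_f(v)=\frac{1}{2\pi}\int_{\partial\D} f\circ v < EP_f(p)+\varepsilon' \le \varepsilon'$ for a chosen small $\varepsilon'$. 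Since $f$ is bounded below by $-C$ and takes the large value $C'$ outside the set where we want $v(\partial\D)$ to land, the bound $\int f\circ v<\varepsilon'$ forces the boundary measure of $\{\zeta\in\partial\D : v(\zeta)\notin U\}$ to be $<\varepsilon$ provided $C'$ was chosen large enough relative to $C$ and $\varepsilon'$ small enough relative to $\varepsilon$; this is the elementary Chebyshev-type estimate on $\int f\circ v$. Setting $E$ to be that boundary set completes the argument. The main obstacle, such as it is, is bookkeeping: one has to choose $f$, the constants $C,C'$, and $\varepsilon'$ in the right order so that the single inequality $P_f(v)<\varepsilon'$ simultaneously encodes "$v$ centered at $p$" and "$v(\partial\D\setminus E)\subset U$ with $|E|<\varepsilon$", and one must make sure $f$ can be taken genuinely continuous and real-valued on all of $M$ (not just near $\widehat K$) so that Theorem \ref{main theorem} applies and $EP_f\not\equiv-\infty$; since $M$ is a manifold and $\widehat K\subset V\Subset M$ this is a standard partition-of-unity construction.
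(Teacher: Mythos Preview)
Your overall strategy is the standard one the paper defers to (cf.\ \cite{DF2}): for the substantive direction, apply Theorem~\ref{main theorem} to a test function $f$ built from $U$, deduce $EP_f(p)\le 0$ from $p\in\widehat K$, and extract a disc via a Chebyshev estimate. However, the sign convention you chose makes the key step fail. You take $f\le 0$ with $f=0$ near $\overline U$ and $f$ equal to a ``large constant'' off a larger set; with $f\le 0$ that constant is large \emph{negative}, so when $v(\partial\D)$ leaves $U$ the integral $P_f(v)$ only becomes more negative, and the inequality $P_f(v)<\varepsilon'$ imposes no upper bound on $|E|=|\{\zeta\in\partial\D:v(\zeta)\notin U\}|$. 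The fix is to take $f\ge 0$ continuous with $f=0$ on $K$ (rather than on a neighborhood of $\overline U$) and $f\ge 1$ on $M\setminus U$. Then $EP_f\le f$ gives $\sup_K EP_f\le 0$, hence $EP_f(p)\le 0$ since $p\in\widehat K$, and any disc $v$ centered at $p$ with $P_f(v)<\varepsilon/(2\pi)$ satisfies $|E|\le 2\pi P_f(v)<\varepsilon$ because $f\ge \chi_{M\setminus U}$.

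In the easy direction your mean-value argument is essentially right, but the term $\frac{\varepsilon}{2\pi}\sup_V g$ is not justified: the disc $v$ maps into $M$, and nothing forces $v(E)\subset V$. Reduce first to $g$ bounded above---compose $g$ with a bounded convex increasing function $\chi\colon\R\to\R$, which preserves $J$-plurisubharmonicity (since $\chi\circ(g\circ v)$ is subharmonic whenever $g\circ v$ is) and preserves the strict inequality $g(p)>\sup_K g$---and then replace $\sup_V g$ by $\sup_M g<\infty$; letting $\varepsilon\to 0$ and $U\searrow K$ then yields $g(p)\le\sup_K g$ as desired.
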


\noindent Note that if  $M$ admits a global strictly $J$-plurisubharmonic function, the compactness assumption made on $\widehat{K}$ may be replaced by any of the equivalent conditions stated in Theorem 5.4 of Diederich and Sukhov \cite{DS}; e.g. one obtains a characterization of $\Psh$-hulls of compact sets contained in Stein domains.

In the case when $M$ does not admit any non-constant bounded $J$-plurisub-harmonic function (e.g. if $M$ is compact), Theorem \ref{main theorem} provides 
$J$-holomorphic discs with prescribed center and with most of their boundary in a given open set. This was pointed out by L\'arusson and Sigurdsson in \cite{Larusson-Sigurdsson1998} (see also \cite{Rosay2}). 
\begin{coro}
Let $(M,J)$ be an almost complex manifold admitting no non-constant bounded $J$-plurisubharmonic function. Let $V\subset M$ be a non-empty open set and let $p\in M$. For every $\varepsilon>0$, there exists $v\in\mathcal{O}_J\left(\overline{\Delta}, M\right)$ centered at $p$ and a set $E\subset \partial \Delta$ of measure $|E|<\varepsilon$ such that $v(\partial \Delta \setminus E)\subset V$.             
\end{coro}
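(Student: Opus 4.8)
The plan is to derive this from Theorem~\ref{main theorem} applied to an upper semi-continuous function that penalises leaving $V$. I would take $f=-\mathbf{1}_V$, i.e. $f\equiv -1$ on $V$ and $f\equiv 0$ on $M\setminus V$. The first thing to check is that such an $f$ is upper semi-continuous: at a point of the open set $V$ the function is locally equal to the constant $-1$, while at a point $q\in M\setminus V$ one trivially has $f\le 0=f(q)$ on all of $M$; hence $f\colon M\to\{-1,0\}\subset\R$ is upper semi-continuous, and in particular Theorem~\ref{main theorem} applies to it.

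Next I would identify the envelope. For $v\in\mathcal{O}_J\left(\overline{\D},M\right)$ one computes
$$P_f(v)=\frac{1}{2\pi}\int_0^{2\pi} f\bigl(v(e^{it})\bigr)\,\mathrm{d}t=-1+\frac{|E_v|}{2\pi},\qquad E_v=\bigl\{\zeta\in\partial\D;\ v(\zeta)\notin V\bigr\},$$
so that $-1\le P_f(v)\le 0$ for every disc $v$, and therefore
$$EP_f(q)=-1+\frac{1}{2\pi}\,\inf\bigl\{\,|E_v|;\ v\in\mathcal{O}_J\left(\overline{\D},M\right),\ v(0)=q\,\bigr\}\in[-1,0].$$
Since $EP_f$ takes values in $[-1,0]$ it is not identically $-\infty$, so by Theorem~\ref{main theorem} it is $J$-plurisubharmonic on $M$; being bounded, the hypothesis on $M$ forces it to be a constant $c\in[-1,0]$. (Note the hypothesis also forces $M$ to be connected, since on a disconnected manifold a function equal to distinct constants on distinct components would be a bounded non-constant $J$-plurisubharmonic function.)

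It remains to show $c=-1$. Fix any point $q_0\in V$; since $V$ is open it contains the image of some $J$-holomorphic disc $v_0\in\mathcal{O}_J\left(\overline{\D},M\right)$ centred at $q_0$ (a sufficiently small $J$-holomorphic disc through $q_0$, whose existence is standard). Then $E_{v_0}=\emptyset$, whence $P_f(v_0)=-1$ and $c=EP_f(q_0)\le -1$; together with $c\ge -1$ this gives $c=-1$. Evaluating at the given point $p$ yields $\inf\{|E_v|;\ v(0)=p\}=0$, so for every $\varepsilon>0$ there exists $v\in\mathcal{O}_J\left(\overline{\D},M\right)$ with $v(0)=p$ and $|E_v|<\varepsilon$; setting $E=E_v$ (and, if one insists on a general smoothly bounded $\Delta$ rather than $\D$, composing with a biholomorphism $\overline{\D}\to\overline{\Delta}$) proves the corollary. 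The argument is soft once Theorem~\ref{main theorem} is available; the only steps that need any care are the verification that $f=-\mathbf{1}_V$ is upper semi-continuous and the use of a small $J$-holomorphic disc contained in $V$ to pin down the value of the constant $J$-plurisubharmonic function $EP_f$, and I do not expect a genuine obstacle there.
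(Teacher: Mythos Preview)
Your proof is correct and follows exactly the approach the paper intends: the paper omits the proof but explicitly frames the corollary as a direct consequence of Theorem~\ref{main theorem}, citing L\'arusson--Sigurdsson and Rosay for precisely this argument (apply the main theorem to an upper semi-continuous indicator-type function, observe the envelope is bounded and hence constant, and evaluate the constant on a small disc inside $V$). There is nothing to add.
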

\noindent It is worth mentioning that the second named author gave a direct construction of such discs in a certain class of compact almost complex manifolds in which the Runge-type approximation results are possible \cite{KUZMAN2}. 

\subsection{Envelope of the Lelong functional} We consider the {\em Lelong functional} associated to a 
non-negative real function $\alpha$ on $M$, defined by
\begin{equation*}
	L_\alpha(v)= \sum_{\z\in \D} \alpha(v(\z))\,  \log|\z|,
\end{equation*}
where $v\in \cO\left(\clD,M\right)$.
Given a $J$-plurisubharmonic function $f\in \Psh(M)$ and a point $p\in M$
we denote by $\nu_f(p) \in [0,+\infty]$ its {\em Lelong number}
at $p$; in any local coordinate system 
$\phi$ on $M$, with $\phi(p)=0$, we have
\[
	\nu_f(p)= \lim_{r\to 0^+} \frac{\sup_{|\phi(q)|\le r} f(q)}{\log r}.
\]
Note that we consider the constant function $f=-\infty$ as $J$-plurisubharmonic
and set $\nu_{-\infty}=+\infty$.
Given a non-negative function $\alpha\colon M\to \R_+$,  consider
$$ \cF_\alpha=\left\{f \in \Psh(M); f\le 0,\ \nu_f \ge\alpha\right\},$$ 
and the corresponding extremal function on $M$
\begin{equation*}
		f_\alpha=\sup \cF_\alpha.
\end{equation*}
The following theorem establishes the correspondence between $f_\alpha$ and the envelope $EL_\alpha$ of the Lelong functional.
\begin{theorem}\label{main_thm}
Let $\alpha\colon M\to \R_+$ be a non-negative function defined on a smooth almost complex manifold $(M,J)$. Then $EL_{\alpha}$ is $J$-plurisubharmonic and coincides with $f_\alpha$.
\end{theorem}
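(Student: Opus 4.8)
The plan is to reduce the Lelong functional to the Poisson functional via a logarithmic change of variables on the disc, following the classical strategy of L\'arusson--Sigurdsson \cite{Larusson-Sigurdsson2003} and Rosay \cite{Rosay2}, and then to invoke Theorem \ref{main theorem}. First I would establish the inequality $EL_\alpha \ge f_\alpha$. For this, fix $p\in M$, take any $f\in\cF_\alpha$, and any disc $v\in\cO(\clD,M)$ with $v(0)=p$. Since $f$ is $J$-plurisubharmonic, $f\circ v$ is subharmonic on $\D$; applying the Riesz decomposition and the fact that $\nu_f\ge\alpha$ controls the mass of the Riesz measure at each point $v(\z)$ (with multiplicity coming from the local degree of $v$ at $\z$), one obtains $f(p) = f(v(0)) \le P_{f\circ v}(\text{boundary values}) + \sum_{\z\in\D}\nu_f(v(\z))\log|\z| \le 0 + L_\alpha(v)$, using $f\le 0$ on $\partial\D$. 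Taking the infimum over $v$ and the supremum over $f\in\cF_\alpha$ yields $f_\alpha \le EL_\alpha$.

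The reverse inequality $EL_\alpha \le f_\alpha$ is the substantive direction and is where Theorem \ref{main theorem} enters. The key idea is that $EL_\alpha$ can be realized as an envelope of a Poisson functional \emph{after} pre-composing with a Blaschke-type map absorbing the prescribed Lelong data. Concretely, I would show that $EL_\alpha$ equals $EP_g$ for a suitable upper semi-continuous $g$ built from $\alpha$; alternatively, and more robustly, I would argue directly that $EL_\alpha$ is $J$-plurisubharmonic by verifying the sub-mean-value inequality along $J$-holomorphic discs, exactly as in the proof of Theorem \ref{main theorem}: given an embedded disc $v_p$ centered at $p$, one attaches along its boundary a family of discs $v_{z'}$ that are near-extremal for $EL_\alpha$ at the boundary points, then applies the Riemann-Hilbert solution of Theorem \ref{propgl} to glue in a disc $h$ centered at $p$ whose boundary is approximately attached to the resulting torus. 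The logarithmic poles are handled by allowing the glued disc to pick up, near finitely many interior points, the Blaschke factors $\zeta\mapsto((\zeta-a_k)/(1-\bar a_k\zeta))^{\lfloor\alpha(\cdot)\rfloor}$ prescribed by $\alpha$; this only contributes the required $\sum\alpha\log|\cdot|$ term and does not affect the boundary estimates. Poletsky's averaging trick then gives $EL_\alpha(p)\le \int_0^{2\pi} EL_\alpha\circ v_p(e^{i\theta})\frac{\du\theta}{2\pi}+\eps$, so $EL_\alpha\in\Psh(M)$. Since $EL_\alpha\le 0$ (take the constant disc when $\alpha$ vanishes there, or a disc of small radius) and $\nu_{EL_\alpha}\ge\alpha$ (read off from the logarithmic singularities the extremal discs are forced to have), we get $EL_\alpha\in\cF_\alpha$, hence $EL_\alpha\le f_\alpha$. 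Combined with the first part, $EL_\alpha = f_\alpha$ and both are $J$-plurisubharmonic.

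The main obstacle I anticipate is the bookkeeping of the interior logarithmic singularities inside the gluing machinery of Section 2: the discs $v_z$ attached along the boundary were constructed for the Poisson problem and have no poles, so one must either (i) incorporate the Blaschke factors into the local pre-gluing maps $\varphi_j^{c,N}$ without destroying the $LQJ_j$-condition of Lemma \ref{c} — which should be fine since a Blaschke factor is a bounded holomorphic reparametrization of the disc variable and does not alter the $n$-th component estimates — or (ii) run the reduction at the level of envelopes, composing every competing disc $v$ in the definition of $EL_\alpha$ with a fixed proper holomorphic self-map of $\D$ that flattens the poles into boundary behavior, and check that this is a bijection on the relevant disc spaces preserving centers. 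Approach (ii) is cleaner and is the one used in \cite{Larusson-Sigurdsson2003}; I would adopt it, so that Theorem \ref{main_thm} becomes a genuine corollary of Theorem \ref{main theorem} rather than a re-run of its proof. The only remaining point requiring care is the semicontinuity and the identification $\nu_{EL_\alpha}\ge\alpha$ at points where $\alpha$ is not integer-valued, which is handled by the standard monotone limit over rational approximations of $\alpha$ from below.
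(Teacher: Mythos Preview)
Your proposal captures the broad architecture---the easy inequality $f_\alpha\le EL_\alpha$ via subharmonicity of $f\circ v$, and the hard direction via the Riemann--Hilbert gluing of Section~2---but it departs from the paper's route and leaves the decisive technical reduction under-specified.

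The paper does \emph{not} argue that $EL_\alpha\in\cF_\alpha$ directly. Instead it introduces the auxiliary disc functional $K_\alpha(v)=\inf_{\zeta\in\D}\alpha(v(\zeta))\log|\zeta|$, sets $k_\alpha=EK_\alpha$, and invokes the identity $f_\alpha=EP_{k_\alpha}$ (established in \cite{KuzDD}, following L\'arusson--Sigurdsson). The chain $f_\alpha=EP_{k_\alpha}\le EL_\alpha\le k_\alpha$ then reduces the theorem to the single inequality $EL_\alpha\le EP_{k_\alpha}$, which is in turn reformulated as: for every continuous $\varphi\ge k_\alpha$, every embedded disc $v$, and every $\epsilon>0$, there is a disc $h$ with $h(0)=v(0)$ and $L_\alpha(h)<\frac{1}{2\pi}\int_0^{2\pi}\varphi\circ v(e^{it})\,\mathrm{d}t+\epsilon$. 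This reformulation is precisely what lets the Poisson machinery of Theorem~\ref{main theorem} apply. Your route through ``$EL_\alpha$ is psh and lies in $\cF_\alpha$'' would require separately verifying upper semicontinuity of $EL_\alpha$ and the Lelong-number bound $\nu_{EL_\alpha}\ge\alpha$; you assert the latter can be ``read off from the extremal discs'', but that is backwards---one must \emph{construct} discs exhibiting the singularity, which is exactly what the estimate $EL_\alpha\le k_\alpha$ encodes.

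The technical heart is the same in either approach, and you correctly identify it: the Riemann--Hilbert construction of Theorem~\ref{propgl} must be upgraded to impose \emph{finitely many additional interior pointwise constraints} on the glued disc $h$, so that $h$ hits prescribed points in the support of $\alpha$ at prescribed preimages near $\partial\D$ and the sum $\sum\alpha(h(\zeta))\log|\zeta|$ picks up the required negative contribution. The paper states this explicitly and refers to \cite{KuzDD} for the bookkeeping. Your option~(i)---building the extra data into the pregluing maps $\varphi_j^{c,N}$---is the right picture; the $LQJ_j$-condition survives because only finitely many constraints are added and the right inverse $Q_\varphi$ can be normalised to vanish at finitely many prescribed points (the standard Cauchy--Green modification noted at the end of \S1.1).

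Your option~(ii), however, is not what \cite{Larusson-Sigurdsson2003} actually does and will not work as stated. Precomposing a competitor $v$ with a proper self-map $\pi\colon\D\to\D$ does not turn $L_\alpha(v)$ into a Poisson integral: the sum $\sum_{\zeta}\alpha(v(\pi(\zeta)))\log|\zeta|$ still sees interior preimages, now with multiplicity, and there is no ``fixed'' $\pi$ independent of the unknown disc $v$. The genuine reduction in the literature is the one through $k_\alpha$ and $EP_{k_\alpha}$ described above. Drop~(ii) and follow the paper's $k_\alpha$-route; then Theorem~\ref{main_thm} becomes a corollary of Theorem~\ref{main theorem} together with the finitely-many-constraints refinement of Theorem~\ref{propgl}.
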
 
\noindent This disc formula was first obtained by Poletsky \cite{Poletsky1993}. For manifolds this result was proved by L\'arusson and Sigurdsson \cite{Larusson-Sigurdsson1998,Larusson-Sigurdsson2003} and by Drinovec Drnov\v{s}ek and Forstneri\v{c} \cite{BDDFF} for locally irreducible complex spaces. We explain its relation with the envelope of the Poisson functional.

If $f\in \cF_\alpha$ and $v\in \cO\left(\clD,M\right)$, then 
$f\circ v\le 0$ is a subharmonic function defined in a neighborhood of $\clD$ whose 
Lelong number at any point $\z\in \D$ satisfies 
\[
	\nu_{f\circ v}(\z) \ge \alpha(v(\z))\, .
\]
Hence $f\circ v$ is bounded above by the largest subharmonic function 
$f_{\alpha \circ v}\le 0$ on $\D$ satisfying $\nu_{f_{\alpha\circ v}}\ge \alpha\circ v$.
This maximal function $f_{\alpha\circ v}$ is the weighted sum of 
Green functions with coefficients $\alpha\circ v$ 
\[
	f_{\alpha\circ v}(z) = \sum_{\z\in \D} \alpha(v(\z)) \, 
		\log\left| \frac{z-\z}{1-\bar \z z}\right|,
		\quad z\in\D.
\]
If the sum is divergent, we take $f_{\alpha\circ v}\equiv-\infty$. 

Setting $z=0$, we note that for every $f\in \cF_\alpha$ and 
$v\in \cO\left(\clD,M\right)$ we have 
\begin{equation*} 
	f(v(0)) 
	\le  \sum_{\z\in \D} \alpha(v(\z)) \log|\z| 
	\le \inf_{\z\in \D} \alpha(v(\z)) \log|\z|=:K_\alpha(v).
\end{equation*}
By taking the infimum over all $J$-holomorphic discs $v$ centered at $p\in M$ we obtain for any $f\in \cF_\alpha$:
\begin{equation}\label{eqenv}
	f(p)\le EL_\alpha(p)
	\le EK_\alpha(p)
	 =: k_\alpha(p).
\end{equation}
It turns out that $f_\alpha$ coincides with  the envelope of the Poisson functional $EP_{k_\alpha}$ for the function $k_\alpha$. Moreover, by taking the supremum over all $f\in\cF_\alpha$ in (\ref{eqenv}) we get $EP_{k_\alpha}=f_\alpha\le E{L}_\alpha$. Thus, in order to prove Theorem \ref{main_thm} it is enough to prove that $E{L}_\alpha\leq EP_{k_{\alpha}}$. 

On almost complex manifolds all the above considerations were settled by the second named author and Drinovec Drnov\v sek \cite{KuzDD}. However, similarly to  \cite{KUZMAN3}, the paper \cite{KuzDD} focuses on real dimension four due to the lack of solutions of the Riemann-Hilbert problem. In particular, proving  $E{L}_\alpha\leq EP_{k_{\alpha}}$ is equivalent to showing that for every continuous function $\varphi\colon M\to \R$ with $\varphi\geq k_\alpha$, embedded $J$-holomorphic disc $v\in\mathcal{O}_J(\clD, M)$, and $\epsilon>0$ there exists a $J$-holomorphic disc $h$ with center  $h(0)=v(0)$ and satisfying
$${L}_{\alpha}(h)=\sum_{z\in\D}\alpha(h(z))\log|z|<\frac{1}{2\pi}\int_0^{2\pi}\varphi\circ v\left(e^{it}\right)\mathrm{d}t+\epsilon.$$
Such a map can now be obtained in any dimension via the new methods developed in Section 2. However, some extra care is needed. In particular, in the proof of Theorem \ref{main theorem} we require that the $J$-holomorphic disc $h(\zeta)$ approximating the map $G(\zeta,c\cdot h_j^N(\zeta))$ satisfies $h(0)=G(0,0)$. In order to prove the above inequality, we must add finitely many such pointwise restrictions (see the proof of Theorem 1 in \cite{KuzDD}). 

\vskip 0,5 cm
\noindent  {\it Acknowledgments.}  Research of the first author was supported by the Center for Advanced Mathematical Sciences at the American University of Beirut. Research of the second named author was supported in part by the research program P1-0291 and the grants J1-3005, J1-1690, N1-0137, N1-0237, US/22-24-079 \ from ARRS, Republic of Slovenia. 
%
\newpage

\vskip 0,1 cm
{\small
\noindent Florian Bertrand\\
American University of Beirut, Beirut, Lebanon\\
{\sl E-mail address}: fb31@aub.edu.lb\\
\\
Uro\v{s} Kuzman \\
University of Ljubljana \& University of Maribor, Slovenia\\
{\sl E-mail address}: 	uros.kuzman@fmf.uni-lj.si\\
} 

\end{document}